\newtheorem{theorem}{Theorem}[section]
\newtheorem{lemma}[theorem]{Lemma}
\newtheorem{proposition}[theorem]{Proposition}
\newtheorem{corollary}[theorem]{Corollary}
\newtheorem{fact}[theorem]{Fact}
\newtheorem{definition}[theorem]{Definition}
\newcommand{\rr}{\mathbb{R}}
\newcommand{\nn}{\mathbb{N}}
\newcommand{\ntptwo}{\text{NTP}_2}
\newcommand{\tptwo}{\text{TP}_2}
\newcommand{\nip}{\text{NIP}}
\newcommand{\bdn}{\text{bdn}}
\newcommand{\nipdp}{\text{NIP-dp}}
\newcommand{\niptp}{\text{NIP-tp}}
\newcommand{\rkdp}{\text{dp}}
\newcommand{\tp}{\text{tp}}
\def\Ind#1#2{#1\setbox0=\hbox{$#1x$}\kern\wd0\hbox to 0pt{\hss$#1\mid$\hss}
\lower.9\ht0\hbox to 0pt{\hss$#1\smile$\hss}\kern\wd0}
\def\ind{\mathop{\mathpalette\Ind{}}}
\def\notind#1#2{#1\setbox0=\hbox{$#1x$}\kern\wd0
\hbox to 0pt{\mathchardef\nn=12854\hss$#1\nn$\kern1.4\wd0\hss}
\hbox to 0pt{\hss$#1\mid$\hss}\lower.9\ht0 \hbox to 0pt{\hss$#1\smile$\hss}\kern\wd0}
\def\nind{\mathop{\mathpalette\notind{}}}
\title{Consequences of Dependent Dividing on Burden}
\author{Yuki Takahashi}
\address{University of California, Berkeley, CA 94720}
\email{yukit@berkeley.edu}
\date{October 31, 2025}
\subjclass[2020]{03C45}
\keywords{dependent dividing, burden, inp-rank, dp-rank, VC-density}
\thanks{}
\begin{document}
\begin{abstract}
If $T$ has dependent dividing, then the burden agrees with the dp-rank witnessed by NIP formulas.
We use this observation to prove that if $T$ has dependent dividing, then the burden is sub-additive. We also state a connection between the burden and the dual VC density. 
\end{abstract}
\maketitle
\section{Introduction}
The stable forking conjecture is one of the most important open conjectures in simple theories. In this paper, we investigate the stable forking conjecture's lesser-known cousin, the dependent dividing conjecture. 
The dependent dividing conjecture was first stated in \cite{Chernikov14} but has not been explored extensively. 
While stable forking is stated for simple theories, dependent dividing generalizes it to $\ntptwo$ theories. Here are the key definitions of $\ntptwo$ and dependent dividing. Note that throughout this paper, we do not distinguish between a singleton and a tuple.
\begin{definition}\label{def:tp2}
    A formula $\phi(x,y)$ has $\tptwo$ if there is an array $(b_{\alpha, i})_{\alpha,i<\omega}$ such that $\{\phi(x,b_{\alpha,i})\}_{i<\omega}$ is 2-inconsistent for every $\alpha<\omega$ and $\{\phi(x,b_{\alpha,f(\alpha)})\}_{\alpha<\omega}$ is consistent for any $f:\omega\to\omega$. Otherwise, we say that $\phi(x,y)$ is $\ntptwo$.
    A theory $T$ is $\ntptwo$ if every formula is $\ntptwo$.
\end{definition}

\begin{definition}\label{def:depdiv}
    We say that a theory $T$ has dependent dividing if given models $M, N$ with $M\preceq N$ and $p(x)\in S(N)$ dividing over $M$, there is an NIP formula $\phi(x;y)$ and $c\in N$ such that $\phi(x,c)\in p(x)$ and $\phi(x,c)$ divides over $M$.
\end{definition}

\begin{fact}[\cite{Chernikov14}, Proposition 4.14]\label{fact:depdiv-ntp2}
    If $T$ has dependent dividing (or even just $\ntptwo$ dividing), then $T$ is $\ntptwo.$ 
\end{fact}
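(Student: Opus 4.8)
The plan is to prove the contrapositive in the stronger form: \emph{if $T$ has $\tptwo$, then $T$ does not have $\ntptwo$ dividing}. This suffices, because every $\nip$ formula is $\ntptwo$, so dependent dividing implies $\ntptwo$ dividing, and hence the failure of the latter implies the failure of the former. So fix a formula $\phi(x;y)$ with $\tptwo$; I will produce models $M\preceq N$ and a type $p\in S(N)$ which divides over $M$ but such that no $\ntptwo$ formula $\psi(x,c)\in p$ (with $c\in N$) divides over $M$.

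First I would fix a highly homogeneous witness to $\tptwo$. By the usual Ramsey and compactness arguments, one extracts an array $(b_{\alpha,i})_{\alpha,i<\omega}$, writing $\bar b_\alpha:=(b_{\alpha,i})_{i<\omega}$, such that the rows $\bar b_\alpha$ are mutually indiscernible, each row $\{\phi(x,b_{\alpha,i})\}_{i<\omega}$ is $2$-inconsistent, and every path $\{\phi(x,b_{\alpha,f(\alpha)})\}_{\alpha<\omega}$ is consistent; moreover, fixing a realization $a$ of the path $\{\phi(x,b_{\alpha,0})\}_{\alpha<\omega}$ (so $\models\neg\phi(a,b_{\alpha,i})$ for all $\alpha$ and all $i\geq1$, by $2$-inconsistency), one can arrange that the sequence of rows $(\bar b_\alpha)_{\alpha<\omega}$ is indiscernible over $Ma$, for a small model $M$ over which the whole configuration is re-extracted. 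After passing to an expansion by Skolem functions — which does not affect that $\phi$ has $\tptwo$ — we may take $N:=\dcl(M\bar I)$ (a model), where $\bar I$ is the array, and set $p:=\tp(a/N)$. Then $p$ divides over $M$: the formula $\phi(x,b_{0,0})$ lies in $p$, the sequence $(b_{0,i})_{i<\omega}$ is $M$-indiscernible, and $\{\phi(x,b_{0,i})\}_{i<\omega}$ is $2$-inconsistent.

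The heart of the argument is the claim that every $\psi(x,c)\in p$ with $c\in N$ which divides over $M$ has $\tptwo$. Here is the intended route. Since $c\in N=\dcl(M\bar I)$, we may write $c=g(\bar b_F)$ for an $M$-definable $g$ and finitely many rows $F\subseteq\omega$. Fix pairwise disjoint blocks $F_m\subseteq\omega$ ($m<\omega$) of the same order type as $F$, and put $c_m:=g(\bar b_{F_m})$. Because $(\bar b_\alpha)_\alpha$ is indiscernible over $Ma$, we get $\bar b_F\equiv_{Ma}\bar b_{F_m}$, hence $c\equiv_{Ma}c_m$; in particular $\models\psi(a,c_m)$ for every $m$, so $\{\psi(x,c_m)\}_{m<\omega}$ is consistent, being realized by $a$. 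Now transport the dividing of $\psi(x,c)$ over $M$: fix an $M$-indiscernible sequence $(c^{(j)})_{j<\omega}$ with $c^{(0)}=c$ and $\{\psi(x,c^{(j)})\}_{j<\omega}$ $k$-inconsistent, and choose $Ma$-automorphisms $\rho_m$ with $\rho_m(c)=c_m$ (possible since $c\equiv_{Ma}c_m$), arranged — by a further Ramsey and compactness clean-up of the sequences $(\rho_m(c^{(j)}))_{j<\omega}$, which keeps $\rho_m(c^{(0)})=c_m$ — to be mutually indiscernible. Then the array $(\rho_m(c^{(j)}))_{m,j<\omega}$ has $k$-inconsistent rows, its column $j=0$ is $(c_m)_{m<\omega}$ and is consistent, and mutual indiscernibility of the rows forces every path to be consistent. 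After the standard reduction from $k$-inconsistency to $2$-inconsistency, this is a $\tptwo$-array for $\psi$. Granting the claim, $p$ witnesses that $T$ does not have $\ntptwo$ dividing, completing the contrapositive.

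The step I expect to be the main obstacle is exactly the clean-up above: producing the $k$-inconsistent rows that witness the dividing of $\psi(x,c)$, keeping the $j=0$ column equal to $(c_m)_m$, and making the rows mutually indiscernible, all simultaneously. The delicate point is that the sequence $(c^{(j)})_j$ witnessing the dividing of $\psi(x,c)$ a priori has nothing to do with the rows of $\bar I$ and may be entangled with $a$; thus the two sources of tree structure — the $\tptwo$ array of $\phi$, responsible for the consistency of the $j=0$ column, and the dividing sequence of $\psi(x,c)$, responsible for the inconsistent rows — must be reconciled with care. The natural way to do this is to extract mutual indiscernibility simultaneously for the whole blocks $\big(\bar b_{F_m},(\rho_m(c^{(j)}))_{j<\omega}\big)_{m<\omega}$ over $M$, and then read off the desired array and verify that its $j=0$ column remains consistent using the $\phi$-path structure carried by (an $M$-conjugate of) $a$.
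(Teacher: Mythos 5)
The paper does not prove this statement; it is imported as a Fact from \cite{Chernikov14}, so your attempt has to be measured against the known proof there rather than against anything in this text. Your global strategy --- prove the contrapositive by manufacturing, from a formula $\phi$ with $\tptwo$, models $M\preceq N$ and a type $p\in S(N)$ dividing over $M$ such that every instance in $p$ dividing over $M$ has $\tptwo$ --- is the right shape, and the first two thirds are essentially fine: re-extracting so that the sequence of rows is indiscernible over $Ma$, Skolemizing so that $N=\dcl(M\bar I)$ (with some routine care about which statements are read in $T$ versus its Skolemization), the dividing of $\phi(x,b_{0,0})$ over $M$, and the production of the consistent column $\{\psi(x,c_m)\}_{m<\omega}$ with $c_m=g(\bar b_{F_m})$ all go through.

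The gap is exactly the step you flag, and it is not a removable technicality. What you need at the end is: given an $Ma$-indiscernible sequence $(c_m)_{m<\omega}$ with $\{\psi(x,c_m)\}_{m<\omega}$ consistent and each $\psi(x,c_m)$ dividing over $M$, produce a $\tptwo$-array for $\psi$ with mutually indiscernible, $k$-inconsistent rows whose first column is still consistent. In this abstract form the implication is false: in the theory of an equivalence relation with infinitely many infinite classes, take $M$ a model, $(c_m)_{m<\omega}$ an indiscernible sequence inside a single class disjoint from $M$, $a$ a further point of that class, and $\psi=E$; all hypotheses hold, yet $E$ is stable. So some extra feature of your configuration must be used, and neither of your proposed rescues supplies it. The generic mutual-indiscernibilization of an array does not preserve a designated column (it only guarantees that finite subconfigurations of the new array are realized by \emph{some} increasing tuples of the old one), and the ``$\phi$-path structure carried by $a$'' does not survive it: $a$ certifies $\psi(a,c_m)$ only for $c_m$ computed at the original column positions, and $a$ explicitly distinguishes column $0$ of the $\phi$-array from the other columns (it satisfies $\phi(x,b_{\alpha,0})$ and refutes $\phi(x,b_{\alpha,i})$ for $i\geq 1$), so after the extraction shifts positions the consistency of the new first column is no longer witnessed by $a$ nor by any evident conjugate of it. The ingredient that makes this step legitimate in \cite{Chernikov14} is \emph{strict invariance} of the parameter sequence, i.e., precisely the combination of \cref{fact:charbdn} and \cref{fact:ShelahLemma} quoted in this paper: from $\tptwo$ one gets an inp-pattern of depth $|T|^+$ in a single variable, hence by \cref{fact:charbdn} a model $M$, a realization $d$, and $(b_\alpha)_{\alpha<|T|^+}$ with $b_\alpha\ind^{ist}_M b_{<\alpha}$ and $\tp(d/Mb_\alpha)$ dividing over $M$; $\ntptwo$ dividing (via \cref{prop:depdivchar}) yields $\ntptwo$ instances $\psi_\alpha(x,c_\alpha)$ with $c_\alpha\in\dcl(Mb_\alpha)$ dividing over $M$; pigeonhole fixes one $\psi$ and one $k$; and \cref{fact:ShelahLemma} --- which \emph{does} preserve the first column exactly, because each new row has the same type over $Mc_\alpha$ as the old one --- converts the chosen dividing sequences into a mutually indiscernible array, giving an unbounded inp-pattern in the single formula $\psi$ and hence $\tptwo$ for $\psi$. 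If you want to keep your construction, you must build the blocks $\bar b_{F_m}$ to be strictly invariant over $M$ from the outset, which is essentially the content of \cref{fact:charbdn} and is the piece your write-up is missing.
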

The dependent dividing conjecture is the converse of the above fact: \textit{if $T$ is $\ntptwo$, then $T$ has dependent dividing}. We investigate the implications of dependent dividing on the burden, which is defined as follows:
\begin{definition}\label{def:bdn}
An inp-pattern (inp stands for independent partition) in $p(x)$ of depth $\kappa$ consists of $(b_{\alpha, i})_{\alpha<\kappa, i<\omega}$, $(\phi_\alpha(x,y_\alpha))_{\alpha<\kappa}$ and $k_\alpha <\omega$ such that 
\begin{itemize}
\item $\{\phi_\alpha(x,b_{\alpha, i})\}_{i<
\omega}$ is $k_\alpha$-inconsistent, for each $\alpha<\kappa$.
\item $\{\phi_\alpha(x,b_{\alpha, f(\alpha)})\}_{\alpha<
\kappa}\cup p(x)$ is consistent, for any $f: \kappa\to \omega$.
\end{itemize}
The burden of $p(x)$, denoted $\bdn(p)$, is the maximum (if it exists) of the depths of all inp-patterns in $p(x)$. 
If $\sup\{\lambda: \text{there is an inp-pattern in $p(x)$ of depth $\lambda$}\}=\kappa$ but there is no inp-pattern of depth $\kappa$, we say $\bdn(p(x))=\kappa_-$.
By $\bdn(a/C)$ we mean $\bdn(tp(a/C))$.
\end{definition}
The definition of burden was introduced in \cite{Adler07}, and it is sometimes referred to as the inp-rank. 
Burden is a notion of a rank that is suitable for $\ntptwo$ theories, since a theory $T$ is $\ntptwo$ iff $\bdn(a/C)<|T|^+$ for every tuple $a$ and set $C$ \cite[Lemma 3.2]{Chernikov14}. Fundamental properties of burden were established in \cite{Chernikov14}; readers should refer to \cite{Chernikov14} for a comprehensive introduction to $\ntptwo$ theories and burden. Since then, groups and fields in which every type has finite burden have been explored in various papers, such as 
\cite{johnson16}, \cite{DG17}, \cite{DG23}, \cite{DG25}, \cite{DG19}, \cite{DW19}, \cite{CS19}, \cite{Touchard23}, and \cite{Fujita25}.

The guiding principle of this paper is that if $T$ has dependent dividing, then the burden behaves similarly to the dp-rank, allowing us to transfer results in dp-rank to burden. 
In particular, assuming dependent dividing, the burden is equivalent to the dp-rank witnessed by NIP formulas, which we call the NIP dp-rank. 

For a notation, given some sequence $I=\langle a_i: i\in \mathcal{I}\rangle$ and some sets $B$ and $C$, we say that $I$ is \textit{NIP-indiscernible over $(B;C)$} if for every NIP formula $\phi(x_0,\ldots,x_n,y;{z})$ with $|x_0|=\cdots=|x_n|=|a_i|$, tuple $b\in B$ of length ${|y|}$, tuple $c\in C$ of length $|z|$, and $i_0<\ldots<i_n$ and $ j_0<\ldots<j_{n}$ from $ \mathcal{I}$, we have \[\vDash \phi(a_{i_0},\ldots a_{i_n},b; c)\leftrightarrow \phi(a_{j_0},\ldots, a_{j_n},b; c).\]
Note that any formula $\phi(x_0,\ldots,x_n,y)$ with no parameter variables is NIP, so if $I$ is NIP-indiscernible over $(B;C)$, then it is indiscernible over $B$.
We say that $I$ is \textit{NIP-indiscernible over $C$} if it is NIP-indiscernible over
$(\emptyset;C)$.
We say that a collection of sequences $\{I_\alpha:\alpha<\kappa\}$ is \textit{mutually NIP-indiscernible over $(B;C)$} if each $I_\alpha$ is NIP-indiscernible over $(BI_{\beta_{\beta\neq \alpha}};C)$.
Similarly, a collection of sequences $\{I_\alpha:\alpha<\kappa\}$ is \textit{mutually NIP-indiscernible over $C$} if each $I_\alpha$ is NIP-indiscernible over $(I_{\beta_{\beta\neq \alpha}};C)$.

\begin{definition}\label{def:nip-dprank}
    Let $p(x)$ be a (partial) type over $C$. 
    We define the NIP dp-rank of $p(x)$, denoted $\nipdp(p(x))$, to be the maximum (if it exists) of $\kappa$ for which there exist $d\vDash p(x)$ and $\{I_\alpha: \alpha<\kappa\}$,
    mutually NIP-indiscernible sequences over $C$ such that for all $\alpha<\kappa$, $I_\alpha$ is not NIP-indiscernible over $Cd$.
    We define $\nipdp(p(x))=\kappa_-$ similarly to how it is defined for the burden.
\end{definition}
Note the similarity of the NIP dp-rank to the dp-rank. The dp-rank of a partial type $p(x)$ over $C$, denoted $\rkdp(p(x))$, is the maximum (if it exists) of $\kappa$ for which there exist $d\vDash p(x)$ and $\{I_\alpha: \alpha<\kappa\}$, mutually indiscernible sequences over $C$, such that for all $\alpha<\kappa$, $I_\alpha$ is not indiscernible over $Cd$; $\rkdp(p(x))=\kappa_-$ is defined similarly. 
It is known that the burden agrees with the dp-rank in NIP theories. The definition of dp-rank was introduced in \cite{Shelah14}, and dp-rank is suitable for $\nip$ theories, since a theory $T$ is $\nip$ iff $\rkdp(a/C)<|T|^+$ for every tuple $a$ and set $C$ \cite[Observation 4.13]{Simon15}. Fundamental properties of the dp-rank were established in \cite{KOU11}, \cite{OS11}, \cite{KS14}. Readers should refer to \cite{Simon15} for a comprehensive introduction to NIP theories and dp-rank.

We now introduce the structure of the paper and the main theorems in each section. 
In \cref{sec:prelim}, we introduce the key definitions and establish the equivalence between the burden and the NIP dp-rank in theories with dependent dividing. 

In \cref{sec:subaddbdn}, we show that the burden is sub-additive in theories with dependent dividing. Note that the burden is already known to be sub-multiplicative in any theory (\cite[Corollary 2.6]{Chernikov14}). Since the burden is sub-additive in NIP theories and in simple theories, Chernikov conjectured that it is sub-additive in $\ntptwo$ theories. The main result of \cref{sec:subaddbdn} is the following:

\begin{restatable}{theorem}{subadditivebdn}\label{thm:subadditivebdn}
    Assume $T$ has dependent dividing and $k_1, k_2<\omega$.
    Let $a_1, a_2$ be tuples such that $\bdn(a_i/A))\leq k_i$ for $i\in {1,2}$. Then, $\bdn(a_1a_2/A)\leq k_1+k_2$.
\end{restatable}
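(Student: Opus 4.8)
The plan is to reduce to subadditivity of the NIP dp-rank and then imitate the standard proof that dp-rank is subadditive in NIP theories, with ``NIP-indiscernible'' everywhere in place of ``indiscernible''. By the equivalence $\bdn=\nipdp$ proved in \cref{sec:prelim} (available since $T$ has dependent dividing), it is enough to prove the NIP dp-rank is subadditive, i.e. $\nipdp(a_1a_2/A)\le\nipdp(a_1/A)+\nipdp(a_2/A)$: granting this, $\bdn(a_1a_2/A)=\nipdp(a_1a_2/A)\le\nipdp(a_1/A)+\nipdp(a_2/A)=\bdn(a_1/A)+\bdn(a_2/A)\le k_1+k_2$. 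The same equivalence, together with the evident monotonicity of burden in the base --- $\bdn(a/B)\le\bdn(a/C)$ when $C\subseteq B$, since enlarging the base only destroys inp-patterns --- also gives $\nipdp(a_2/Aa_1)\le\nipdp(a_2/A)\le k_2$, which I will use; monotonicity of $\nipdp$ in the base is not transparent from its definition.

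Suppose for contradiction $\nipdp(a_1a_2/A)\ge k_1+k_2+1$, witnessed by $d=a_1a_2$ and mutually NIP-indiscernible sequences $(I_t)_{t\le k_1+k_2}$ over $A$, none NIP-indiscernible over $Ad$. Set $S_1=\{t:I_t\text{ is not NIP-indiscernible over }Aa_1\}$ and $S_2=\{t:t\notin S_1\}$; this partitions the index set because ``not NIP-indiscernible over a set'' passes up to supersets. A subfamily of a mutually NIP-indiscernible family is again mutually NIP-indiscernible, so $(I_t)_{t\in S_1}$ witnesses $\nipdp(a_1/A)\ge|S_1|$, whence $|S_1|\le k_1$; and for $t\in S_2$ the sequence $I_t$ is NIP-indiscernible over $Aa_1$ but not over $Aa_1a_2$. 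Since $|S_1|+|S_2|=k_1+k_2+1$, it now suffices to extract from $(I_t)_{t\in S_2}$ a witness that $\nipdp(a_2/Aa_1)\ge|S_2|$.

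This is the crux. The finitely many sequences $(I_t)_{t\in S_2}$ are mutually NIP-indiscernible over $A$ and individually NIP-indiscernible over $Aa_1$, but need not be mutually NIP-indiscernible over $Aa_1$. To repair this I would apply the standard extraction of mutually indiscernible arrays over the base $Aa_1$ to obtain a mutually $Aa_1$-indiscernible array $(J_t)_{t\in S_2}$ locally based on $(I_t)_{t\in S_2}$ over $Aa_1$; in particular it is mutually NIP-indiscernible over $Aa_1$. For each $t\in S_2$ fix an NIP formula $\phi_t$ and a finite increasing configuration in $I_t$ witnessing the failure of NIP-indiscernibility over $Aa_1a_2$; transporting these (finitely many) configurations to $(J_t)_{t\in S_2}$ via ``locally based on'' and applying compactness, I obtain $a_2'$ with $a_2'\equiv_{Aa_1}a_2$ such that no $J_t$ is NIP-indiscernible over $Aa_1a_2'$. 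Then $(J_t)_{t\in S_2}$ together with $a_2'$ witnesses $\nipdp(a_2/Aa_1)\ge|S_2|$, forcing $k_1+k_2+1\le k_1+k_2$: contradiction.

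The step I expect to be the main obstacle is justifying this extraction in the NIP-indiscernible setting. NIP-indiscernibility is a type-like condition over infinitely many NIP formulas rather than full first-order indiscernibility, so the extraction of a mutually indiscernible array ``locally based on'' a given one, and the simultaneous transport of the witnessing configurations, must be redone with NIP formulas in the role of all formulas. This should succeed because NIP formulas are closed under Boolean combinations and under permuting the object variables, so the relevant EM-type-style conditions remain consistent and the Ramsey/compactness machinery is insensitive to the restriction to NIP formulas --- but confirming this carefully, and keeping it compatible with the base-change inequality $\nipdp(a_2/Aa_1)\le\nipdp(a_2/A)$ that is routed through the burden, is where the real work sits.
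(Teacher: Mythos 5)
Your top-level strategy matches the paper's: route everything through the equivalence of burden with the NIP dp-rank (\cref{thm:bdn=nipdp}), use base monotonicity of burden to get $\bdn(a_2/Aa_1)\le k_2$, and reduce to showing that enough of the given mutually NIP-indiscernible sequences remain \emph{mutually} NIP-indiscernible over $Aa_1$. But the step you flag as the crux is a genuine gap, and the repair you propose does not work. Your partition only yields that the $|S_2|\ge k_2+1$ sequences indexed by $S_2$ are \emph{individually} NIP-indiscernible over $Aa_1$; to upgrade this to mutual NIP-indiscernibility you extract a new array $(J_t)$ locally based on $(I_t)_{t\in S_2}$ over $Aa_1$ and try to transport the witnesses of failure over $Aa_1a_2$. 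This fails for two reasons. First, ``locally based on'' transfers finite types in the wrong direction: every finite configuration of $(J_t)$ realizes over $Aa_1$ a type realized in $(I_t)$, but an existential property of $(I_t)$ (``there is a configuration witnessing failure of $\phi_t$-indiscernibility'') need not hold of $(J_t)$. Second, and more fundamentally, the witnessing configurations involve the parameter $a_2$, which is not (and cannot be) in the base of the extraction --- putting it there would make every $J_t$ NIP-indiscernible over $Aa_1a_2$. Even if each $J_t$ separately had the same type over $Aa_1$ as $I_t$, the resulting conjugates of $a_2$ would differ row by row, and the consistency of the single partial type asserting that one $a_2'\models\tp(a_2/Aa_1)$ simultaneously breaks indiscernibility of every row is exactly what needs proof and is not supplied by compactness plus locally-based-ness. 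The issue is not the restriction to NIP formulas, which is the only obstacle you anticipate.

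This is precisely the difficulty that forces the structure of the paper's argument (following Kaplan--Onshuus--Usvyatsov). Instead of replacing the sequences, the paper \emph{extends} each one by realizations of average types (\cref{prop:extendindisc}, \cref{cor:extendindisc}), so that the original rows --- and hence the witnesses of non-indiscernibility over $Aa$ --- survive intact, and the finite tuple from the other rows that obstructs mutual indiscernibility can be replaced by a conjugate living in the appended tails. This is then run through the inductions of \cref{lem:minlem}, \cref{lem:fin} and the $S_{k,n}$ proposition to obtain \cref{prop:finprop}: from $m$ mutually NIP-indiscernible sequences, some $m-k$ remain mutually NIP-indiscernible over $Aa$ when $\bdn(a/A)\le k$. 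That statement is the correct substitute for your extraction step, and once it is available the theorem follows in two lines exactly as you outline. To complete your proof you would need to prove \cref{prop:finprop} (or an equivalent), which is where essentially all of the work in the paper's Section 3 lies.
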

Note that for types of infinite burden, sub-multiplicativity already implies sub-additivity.
Therefore, if the dependent dividing conjecture is true, then \cref{thm:subadditivebdn} implies that the burden is sub-additive in $\ntptwo$ theories. The proof of \cref{thm:subadditivebdn} is based on the proof of the sub-additivity of the dp-rank in \cite{KOU11} and the equivalence of the burden and the NIP dp-rank. 
After proving \cref{thm:subadditivebdn}, we state a connection between the NIP dp-rank and the dual VC density in \cref{subsec:vcdim}. This is an immediate consequence of a result in \cite{KOU11}.

\section{Dependent Dividing, NIP dp-rank and Burden}\label{sec:prelim}
In this section, we recall facts about $\ntptwo$ theories and burden. Then, we use them to
prove the equivalence between the burden and the NIP dp-rank assuming that $T$ has dependent dividing. The following are variations of the ict-pattern (which can be used to characterize the dp-rank) and the inp-pattern (which is used to characterize the burden) with the restriction that the witnessing formulas are NIP.
\begin{definition}\label{def:nipcit}
    Assume that $p(x)$ is a (partial) type over $C$. 
    An ict-pattern (ict stands for independent contradictory types) of depth $\kappa$ in $p(x)$ is a sequence $(\phi_\alpha(x,y_\alpha))_{\alpha<\kappa}$ of formulas and an array of tuples $(b_{\alpha,i})_{\alpha<\kappa, i<\omega}$ such that the following set of formulas 
    \[\{\phi_\alpha(x,b_{\alpha,i}):\alpha<\kappa, i<\omega, f(\alpha)=i\}\cup \{\neg\phi_\alpha(x,b_{\alpha,i}):\alpha<\kappa, i<\omega, f(\alpha)\neq i\}\cup p(x)\]
    is consistent for every path $f:\kappa\to \omega$.

    An ict-pattern is called an NIP-ict-pattern if each $\phi_\alpha(x;y_\alpha)$ is NIP.
\end{definition}
\begin{definition}\label{def:nipbdn}
    An inp-pattern consisting of $(b_{\alpha, i})_{\alpha<\kappa, i<\omega}$, $(\phi_\alpha(x,y_\alpha))_{\alpha<\kappa}$ and $k_\alpha<\omega$ is called an NIP-inp-pattern if each $\phi_\alpha(x;y_\alpha)$ is NIP.
\end{definition}
Throughout this paper, given an array $(b_{\alpha, i})_{\alpha<\kappa, i<\omega}$, a sequence $(b_{\alpha, i})_{i<\omega}$ (for fixed $\alpha$) is called a row, and a sequence $(b_{\alpha, i})_{\alpha<\kappa}$ (for fixed $i$) is called a column.

We also remind the readers of strict non-forking and strict invariance:
\begin{definition}\label{def:st}
        We say that $\tp(a/Ab)$ strictly does not fork over $A$ (and write $a\ind_A^{st} b$) if there is a global extension $p$ of $\tp(a/Ab)$ which does not fork over $A$ and for any $B\supseteq Ab$, if $c\vDash p|_B$, then $\tp(B/Ac)$ does not divide over $A$. 
\end{definition}
\begin{definition}\label{def:ist}
        We say that $\tp(a/Ab)$ is strictly invariant over $A$ (and write $a\ind_A^{ist} b$) if there is a global extension $p$ of $\tp(a/Ab)$ which is Lascar invariant over $A$ and for any $B\supseteq Ab$, if $c\vDash p|_B$, then $\tp(B/Ac)$ does not divide over $A$.
        
\end{definition}
\begin{fact}[\cite{KU14}, Definition 3.7]\label{fact:st=ist}
    Generally, $a\ind^{ist}_A b$ implies $a\ind^{st}_A b$.
    If $T$ is NIP, then $a\ind^{st}_A b$ implies $a\ind^{ist}_A b$.
\end{fact}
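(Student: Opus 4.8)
The plan is to notice that the second clause in \cref{def:st} and \cref{def:ist} — the ``strictness'' requirement that whenever $c\vDash p|_B$ we have $\tp(B/Ac)$ not dividing over $A$ — is \emph{literally identical} in the two definitions. Consequently, in each direction I would keep the same global extension $p$ of $\tp(a/Ab)$ as the witness and only need to convert its first-clause property: from Lascar invariance over $A$ to non-forking over $A$ (the general direction), and, under NIP, from non-forking over $A$ back to Lascar invariance over $A$ (the converse). Both assertions then reduce to standard facts about global types, with the unchanged strictness clause simply carried along.

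For the general implication $a\ind_A^{ist} b \Rightarrow a\ind_A^{st} b$, let $p$ be a global extension of $\tp(a/Ab)$ that is Lascar invariant over $A$ and strict. I would show that any global Lascar-invariant type does not fork over $A$. First, it does not divide: if some $\phi(x,b')\in p$ divided over $A$, witnessed by an $A$-indiscernible sequence $(b'_i)$ with $b'_0=b'$ and $k$-inconsistent row, then all the $b'_i$ share the same Lascar strong type over $A$, so Lascar invariance forces $\phi(x,b'_i)\in p$ for every $i$; a realization of $p$ would then satisfy the whole inconsistent set, a contradiction. Non-forking follows from non-dividing because $p$ is a \emph{complete} global type: a forking formula in $p$ implies a finite disjunction of $A$-dividing formulas, that disjunction lies in $p$, and completeness forces one disjunct into $p$, contradicting non-dividing. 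Hence the same $p$ witnesses $a\ind_A^{st} b$.

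For the NIP converse, let $p$ be a global extension of $\tp(a/Ab)$ witnessing $a\ind_A^{st} b$; in particular $p$ does not fork over $A$. The key step is the NIP characterization that a global type which does not fork over a set $A$ is Lascar invariant over $A$ (equivalently, invariant over $\mathrm{bdd}(A)$). I would invoke this from the theory of forking in NIP (see \cite{Simon15}): if $p$ Lascar-split over $A$, one extracts $b_0,b_1$ of equal Lascar strong type over $A$ with $\phi(x,b_0),\neg\phi(x,b_1)\in p$, arranges them along a suitable $A$-indiscernible sequence, and uses the NIP bound on alternations to produce a dividing, hence forking, formula inside $p$, contradicting non-forking. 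Granting this, $p$ is Lascar invariant over $A$, and since the strictness clause is unchanged, the same $p$ witnesses $a\ind_A^{ist} b$.

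The main obstacle is precisely this last NIP input: that non-forking implies non-Lascar-splitting. It genuinely requires the NIP hypothesis (it can fail outside NIP) and is where essentially all the work lies; the remainder is bookkeeping that exploits the identical second clauses together with the elementary, theory-independent fact that Lascar invariance implies non-forking.
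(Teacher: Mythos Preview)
Your argument is correct and follows the standard route: keep the same witnessing global type $p$, carry the identical strictness clause along unchanged, and convert the first clause using (i) the elementary fact that Lascar-invariant global types do not fork, and (ii) the NIP result that global non-forking types are Lascar invariant over their base.

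There is, however, nothing in the paper to compare against: this statement is recorded as a \emph{Fact} with a citation to \cite{KU14} and is not proved in the paper itself. So your proposal is not an alternative to the paper's proof but rather a sketch of the cited result. As such it is accurate; the only place one might ask for more detail is the NIP direction, where ``arranging $b_0,b_1$ along a suitable $A$-indiscernible sequence'' hides the step that equal Lascar strong type means $b_0,b_1$ are connected by a finite chain of $A$-indiscernible sequences, along one of which the alternation argument is run. You flag this as the main obstacle and defer to \cite{Simon15}, which is appropriate.
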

As an important example, $\tp(a/Mb)$ is strictly invariant over $M$ (where $M$ is a model) if $\tp(a/Mb)$ has a global heir-coheir extension over $M$.

\begin{fact}[\cite{Chernikov14}, Lemma 4.3]\label{fact:enoughsaturation-ist}
    Let $p(x)$ be a global type invariant over $A$, and let $M\supseteq A$ be a $|A|^+$-saturated model. Then, $p(x)$ is an heir over $M$ and hence strictly invariant over $M$.
\end{fact}
Here is a fundamental fact about dividing in $\ntptwo$ theories, which is called ``Kim's lemma in $\ntptwo$ theories." 
\begin{fact}[\cite{CK12}, Lemma 3.14]\label{fact:Kimslemma}
    Assume that $T$ is $\ntptwo$.
    If $\phi(x,a)$ divides over $A$, and $( b_i)_{i<\omega}$ is a sequence satisfying $b_i\equiv_A a$ and $b_i\ind^{ist}_A b_{<i}.$ Then $\{\phi(x,b_i):i<\omega\}$ is inconsistent.
    In particular, if $( b_i)_{i<\omega}$ is an indiscernible sequence, then it witnesses dividing of $\varphi(x,a).$
\end{fact}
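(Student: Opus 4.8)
The plan is to argue by contradiction and to manufacture a $\tptwo$-pattern for $\phi$, contradicting the hypothesis that $T$ is $\ntptwo$. Fix $k<\omega$ witnessing that $\phi(x,a)$ is $k$-inconsistent along a dividing witness; since $b_i\equiv_A a$, each $\phi(x,b_i)$ likewise $k$-divides over $A$. Suppose toward a contradiction that $\{\phi(x,b_i):i<\omega\}$ is consistent and fix $d$ realizing it. By a standard extraction we may assume $\langle b_i:i<\omega\rangle$ is $A$-indiscernible: this preserves consistency of $\{\phi(x,b_i)\}$, since the extracted Ehrenfeucht--Mostowski type is finitely satisfied in the original sequence, and it preserves the hypothesis $b_i\ind_A^{ist}b_{<i}$; alternatively one runs the construction below directly from the given sequence, invoking strict invariance at each stage.

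First I would reduce the goal to producing an array $(c_{\alpha,j})_{\alpha,j<\omega}$ with $c_{\alpha,0}=b_\alpha$ such that (i) each row $\{\phi(x,c_{\alpha,j})\}_{j<\omega}$ is $k$-inconsistent, and (ii) the rows $\langle c_{\alpha,j}:j<\omega\rangle_{\alpha<\omega}$ are mutually $A$-indiscernible. Such an array is a $\tptwo$-pattern for $\phi$: property (i) gives $k$-inconsistent rows, while (ii) together with the consistency of the first column $\{\phi(x,c_{\alpha,0}):\alpha<\omega\}=\{\phi(x,b_\alpha):\alpha<\omega\}$ (witnessed by $d$) upgrades to consistency along \emph{every} path, because mutual indiscernibility provides, for each $f\colon\omega\to\omega$, an $A$-automorphism sending the column $(c_{\alpha,f(\alpha)})_\alpha$ to $(c_{\alpha,0})_\alpha$ (moving one row at a time, with compactness for infinite paths). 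Since a formula admitting a $\tptwo$-pattern with $k$-inconsistent rows already has $\tptwo$, this contradicts $\ntptwo$.

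I would build the array by induction on finite stages and then apply compactness. Having produced mutually $A$-indiscernible rows for $\alpha<n$, I add a row for $b_n$ that is an $A$-indiscernible sequence witnessing the $k$-dividing of $\phi(x,b_n)$ through $c_{n,0}=b_n$, chosen indiscernible over $A$ together with all previously built rows (and the tail of $\langle b_i\rangle$), without disturbing consistency of the first column with $d$. The hard part is precisely this step, where the new row must simultaneously start at $b_n$, witness $k$-inconsistency, and be indiscernible over everything already in play. This is exactly what $b_n\ind_A^{ist}b_{<n}$ supplies: it yields a global Lascar-invariant extension that is \emph{strict}, so by the clause $\tp(B/Ac)$ does not divide over $A$ (taking $B$ to contain $A b_{<n}$ and the rows assembled so far, and $c=b_n$ after transport), the assembled data does not divide over $A$ over $b_n$; Lascar invariance then lets me transport an $A$-indiscernible dividing sequence for $\phi(x,b_n)$ to one indiscernible over that data while fixing $b_n$, keeping the first column consistent. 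I expect this reconciliation --- spreading each $b_\alpha$ into a dividing (hence suitably generic) sequence while preserving both mutual indiscernibility and a consistent diagonal --- to be the main obstacle, and it is strict invariance, rather than mere strict non-forking (cf.\ \cref{fact:st=ist}), that resolves it.

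Finally, the ``in particular'' clause is immediate from the definition of dividing: if the given sequence is moreover $A$-indiscernible, then since $b_i\equiv_A a$ it is, up to an $A$-automorphism placing $a$ at the start, an $A$-indiscernible sequence in $\tp(a/A)$, so the inconsistency of $\{\phi(x,b_i)\}$ established above is by definition a witness that $\phi(x,a)$ divides over $A$.
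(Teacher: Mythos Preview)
The paper does not prove this statement; it is recorded as a cited fact from \cite{CK12}. Your overall strategy --- assume $\{\phi(x,b_i)\}_{i<\omega}$ is consistent, spread each $b_i$ into an $A$-indiscernible dividing row, arrange the rows to be mutually $A$-indiscernible while keeping the first column equal to $(b_i)$, and read off a $\tptwo$-pattern for $\phi$ --- is exactly the standard argument from \cite{CK12}, so there is no paper proof to compare against and your outline is correct.

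Two remarks on the execution. First, the step you flag as the main obstacle (producing the mutually $A$-indiscernible array from the strictly invariant column) is precisely the content of \cref{fact:ShelahLemma}, the very next fact quoted in the paper; in this framework one simply invokes that directly rather than re-deriving it inline. Your inductive description is in the right spirit, but the sentence ``Lascar invariance then lets me transport an $A$-indiscernible dividing sequence for $\phi(x,b_n)$ to one indiscernible over that data while fixing $b_n$'' is doing all the work and is not itself justified. Second, the preliminary Ramsey extraction to an $A$-indiscernible column is unnecessary (\cref{fact:ShelahLemma} does not assume it), though your claim that extraction preserves $b_i\ind^{ist}_A b_{<i}$ is in fact correct, via right monotonicity of $\ind^{ist}$ and invariance under $A$-automorphisms applied to each finite initial segment of the extracted sequence. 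The ``in particular'' clause is handled correctly.
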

We call a sequence $( b_i)_{i<\omega}$ \textit{strictly invariant over $A$} if $b_i\ind^{ist}_A b_{<i}.$ 
Here is another important fact about strictly invariant sequences, which allows us to make indiscernible sequences mutually indiscernible without changing their type over the first element.

\begin{fact}[\cite{KU14}, Lemma 3.11]\label{fact:ShelahLemma}
    Let $( b_\alpha)_{\alpha<\kappa}$ be a strictly invariant sequence over $A$. Then, 
    for every array $(b_{\alpha,i})_{\alpha<\kappa, i<\omega}$ with infinite $A$-indiscernible rows and $b_{\alpha,0}=b_\alpha$, there exists $(b'_{\alpha,i})_{\alpha<\kappa, i<\omega}$ such that 
    \begin{itemize}
        \item $(b_{\alpha,i}')_{i<\omega}\equiv_{b_\alpha A} (b_{\alpha,i})_{i<\omega}$ for each $\alpha<\kappa$.
        \item the rows of $(b'_{\alpha,i})_{\alpha<\kappa, i<\omega}$ are mutually indiscernible over $A.$
    \end{itemize}
\end{fact}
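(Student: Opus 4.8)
The plan is to reduce to the case of finitely many rows by compactness and then induct on the number of rows, peeling off the last element of the strictly invariant column and re-attaching it using strict invariance.

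First I would reduce to $\kappa=n<\omega$. The two requirements on the array $(b'_{\alpha,i})$ --- that each row satisfy $(b'_{\alpha,i})_i\equiv_{b_\alpha A}(b_{\alpha,i})_i$ and that the rows be mutually $A$-indiscernible --- together form a partial type in the variables $(x_{\alpha,i})_{\alpha<\kappa,\,i<\omega}$, since mutual indiscernibility and the row conditions are each expressed by formulas mentioning only finitely many rows and columns. A finite fragment involves only finitely many rows $\alpha_1<\cdots<\alpha_m$, and since $\ind^{ist}$ is monotone on the right (one and the same global strictly invariant type witnesses $b_\alpha\ind^{ist}_A b'$ for any subtuple $b'$ of $b_{<\alpha}$, as strictness is a property of that global type), the sub-column $\langle b_{\alpha_1},\dots,b_{\alpha_m}\rangle$ is again strictly invariant over $A$. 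Hence it suffices to realise the sub-array on these rows, and compactness yields the full array.

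So assume $\kappa=n$ and induct on $n$, the cases $n\le 1$ being trivial. For the step, apply the inductive hypothesis to $\langle b_0,\dots,b_{n-1}\rangle$ (strictly invariant over $A$ by monotonicity) to obtain a mutually $A$-indiscernible array $D=(b'_{\alpha,i})_{\alpha<n}$ with $b'_{\alpha,0}=b_\alpha$ and $(b'_{\alpha,i})_i\equiv_{b_\alpha A}(b_{\alpha,i})_i$. Before adding the top row I would upgrade the strict invariance from the old column to the whole old array: let $q$ be a global Lascar-invariant strict extension of $\tp(b_n/Ab_{<n})$ witnessing $b_n\ind^{ist}_A b_{<n}$, pick $c\models q|_{AD}$, so that $c\equiv_{Ab_{<n}}b_n$, and choose $\sigma\in\mathrm{Aut}(\mathfrak{C}/Ab_{<n})$ with $\sigma(c)=b_n$. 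Replacing $D$ by $\sigma(D)$ leaves all the inductive data intact (since $\sigma$ fixes $A$ and each $b_\alpha$) while now $b_n\ind^{ist}_A D$, because $c\ind^{ist}_A D$ via $q$ and $\sigma$ is an $A$-automorphism.

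It remains to attach the top row, and this is the heart of the argument. I would produce a sequence $R'=(b'_{n,i})_i$ that (a) realises $\tp((b_{n,i})_i/b_nA)$, (b) is indiscernible over $AD$, and (c) leaves the rows of $D$ mutually indiscernible over $AR'$. For (c), the strict half of $b_n\ind^{ist}_A D$ --- that $\tp(D/Ab_n)$ does not divide over $A$ --- is exactly what keeps the old rows indiscernible once the new row is added to their base; for (b), Lascar-invariance over $A$ is the tool that forces indiscernibility over $AD$. \textbf{The main obstacle is to reconcile (a) with (b):} the prescribed $A$-EM-type of the given row $(b_{n,i})_i$ need not be the EM-type of a Morley sequence of an invariant extension of $\tp(b_n/AD)$, so I cannot simply take such a Morley sequence. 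Instead I would transport the given row over the old array using non-dividing --- spreading copies of $\tp(D/Ab_n)$ along $(b_{n,i})_i$, which is consistent precisely because $\tp(D/Ab_n)$ does not divide over $A$ --- producing a conjugate $D'\equiv_A D$ over which each point of the row has the correct type, and then apply a Ramsey extraction to promote ``the same type over $AD'$ at each point'' to genuine indiscernibility of the new row over $AD'$. The delicate point is to run the extraction so that it simultaneously preserves the prescribed row type in (a) and, via strictness, the mutual indiscernibility of $D'$ in (c). This bookkeeping, carried out as in \cite{KU14}, completes the induction.
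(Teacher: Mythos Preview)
The paper does not contain a proof of this statement: it is recorded as a \texttt{fact} environment citing \cite{KU14}, Lemma 3.11, and is used as a black box in the proof of \cref{thm:bdn=nipdp}. There is therefore nothing in the paper to compare your proposal against.

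As for the proposal on its own terms: the compactness reduction to finitely many rows and the induction scheme are sound, and the automorphism trick to upgrade $b_n\ind^{ist}_A b_{<n}$ to $b_n\ind^{ist}_A D$ is standard and correct. The genuine content, as you yourself flag, is in attaching the top row while simultaneously securing (a), (b), and (c). Your sketch here is not self-contained: invoking non-dividing of $\tp(D/Ab_n)$ over $A$ does give a $D'\equiv_{Ab_n}D$ over which the prescribed row $(b_{n,i})_i$ is indiscernible, but it does not by itself guarantee that the rows of $D'$ remain mutually indiscernible over $A$ together with the \emph{entire} new row (only over $Ab_n$), nor that the first column of $D'$ is still literally $b_{<n}$; and the closing sentence ``This bookkeeping, carried out as in \cite{KU14}, completes the induction'' defers precisely the step that needs an argument. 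So the proposal is a correct outline of the strategy in the cited reference, but it is a sketch rather than a proof, with the crucial reconciliation of (a)--(c) left to the citation.
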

The following fact is a characterization of burden using strict invariance. We include the proof of this fact here because we need it later in the proof of \cref{thm:bdn=nipdp}.
\begin{fact}[\cite{Chernikov14}, Theorem 4.7]\label{fact:charbdn}
    Let $p(x)$ be a type over $C$. The following are equivalent:
    \begin{enumerate}
        \item There is an inp-pattern of depth $\kappa$ in $p(x)$.
        \item There exist $d\vDash p(x)$, $M\supseteq C$, and $(e_\alpha)_{\alpha<\kappa}$ such that $e_\alpha \ind_M^{ist} e_{<\alpha}$ and $d\nind_M^d e_\alpha$ for all $\alpha<\kappa$.
    \end{enumerate}
\end{fact}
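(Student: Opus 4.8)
The plan is to prove the two implications separately. The implication from the strict‑invariance statement to the existence of an inp‑pattern is the routine one and is a direct application of Shelah's lemma, \cref{fact:ShelahLemma}; the converse is the substantive direction and requires first normalizing the inp‑pattern and then carrying out a model‑building construction. Note that $\ntptwo$ is not needed anywhere: Kim's lemma (\cref{fact:Kimslemma}) is not invoked, and the only nontrivial inputs are \cref{fact:ShelahLemma}, the usual extraction of mutually indiscernible arrays, and the fact that global coheirs over a model $M$ are strictly invariant over $M$ (i.e.\ witness $\ind_M^{ist}$).

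For the direction from the strict‑invariance statement to the inp‑pattern, suppose we are given $d\vDash p(x)$, a model $M\supseteq C$ and $(b_\alpha)_{\alpha<\kappa}$ with $b_\alpha\ind_M^{ist} b_{<\alpha}$ and $d\nind_M^d b_\alpha$ for each $\alpha$. For each $\alpha$, since $\tp(d/Mb_\alpha)$ divides over $M$, I would pick a formula $\phi_\alpha(x,b_\alpha)\in\tp(d/Mb_\alpha)$ dividing over $M$ (absorbing any parameters of $\phi_\alpha$ from $M$ into the tuple $b_\alpha$, which does not disturb $\ind_M^{ist}$), extract an $M$‑indiscernible witness of this dividing, and conjugate over $M$ to obtain an $M$‑indiscernible sequence $(b_{\alpha,i})_{i<\omega}$ with $b_{\alpha,0}=b_\alpha$ and $\{\phi_\alpha(x,b_{\alpha,i})\}_{i<\omega}$ $k_\alpha$‑inconsistent for some $k_\alpha<\omega$. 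Applying \cref{fact:ShelahLemma} to the strictly invariant sequence $(b_\alpha)_{\alpha<\kappa}$ and the array $(b_{\alpha,i})_{\alpha<\kappa,i<\omega}$ yields a mutually indiscernible array $(b'_{\alpha,i})_{\alpha<\kappa,i<\omega}$ over $M$ with $(b'_{\alpha,i})_{i<\omega}\equiv_{b_\alpha M}(b_{\alpha,i})_{i<\omega}$ for each $\alpha$. Then $\{\phi_\alpha(x,b'_{\alpha,i})\}_{i<\omega}$ remains $k_\alpha$‑inconsistent, and for any $f:\kappa\to\omega$ the column $(b'_{\alpha,f(\alpha)})_{\alpha<\kappa}$ has the same type over $M$ as $(b'_{\alpha,0})_{\alpha<\kappa}=(b_\alpha)_{\alpha<\kappa}$ (shift the rows one at a time on finite sub‑tuples, using mutual indiscernibility, then apply compactness), so an automorphism fixing $M$ sends $d$ to some $d'\vDash p(x)$ with $\vDash\phi_\alpha(d',b'_{\alpha,f(\alpha)})$ for all $\alpha$; hence the path is consistent with $p(x)$. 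Thus $(b'_{\alpha,i})_{\alpha,i}$, $(\phi_\alpha)_\alpha$, $(k_\alpha)_\alpha$ form an inp‑pattern of depth $\kappa$ in $p(x)$.

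For the converse, start from an inp‑pattern of depth $\kappa$ in $p(x)$. First I would normalize it: by a Ramsey‑style extraction together with compactness, replace the array by one that is mutually indiscernible over $C$ (this preserves $k_\alpha$‑inconsistency of the rows and consistency of every path with $p$), and then fix $d\vDash\{\phi_\alpha(x,b_{\alpha,0}):\alpha<\kappa\}\cup p(x)$, using the constant path. The core step is to produce a model $M\supseteq C$, an array $(b'_{\alpha,i})_{\alpha<\kappa,i<\omega}$ realizing the same type over $C$ as the normalized array (so that every path remains consistent with $p$), and a realization $d'\vDash\{\phi_\alpha(x,b'_{\alpha,0}):\alpha<\kappa\}\cup p(x)$, such that each row $(b'_{\alpha,i})_{i<\omega}$ is indiscernible over $M$ and the column $(b'_{\alpha,0})_{\alpha<\kappa}$ is strictly invariant over $M$. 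Granting this, setting $b_\alpha:=b'_{\alpha,0}$ and $d:=d'$ completes the proof: $\phi_\alpha(x,b_\alpha)\in\tp(d/Mb_\alpha)$ divides over $M$ because the $M$‑indiscernible row $(b'_{\alpha,i})_{i<\omega}$ witnesses it, so $d\nind_M^d b_\alpha$, while $b_\alpha\ind_M^{ist} b_{<\alpha}$ holds by construction. To build $M$ and the $b'_{\alpha,i}$ I would stretch each row on the left to a long indiscernible sequence and then proceed by a recursion over $\alpha<\kappa$, growing $M$ and moving the array by $C$‑automorphisms, so that at stage $\alpha$ the element $b'_{\alpha,0}$ realizes a global coheir of its type over $M$ together with the earlier columns $b'_{\beta,0}$ ($\beta<\alpha$) --- making the column a coheir, hence strictly invariant, sequence over $M$ --- while every row stays indiscernible over the model that results in the end.

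The main obstacle is exactly this recursive construction: one must simultaneously guarantee, for all $\kappa$ rows at once, that the final model $M$ sees, approaching each $b'_{\alpha,0}$, a long sequence indiscernible over $M$ and over the earlier columns (so that the column is a coheir sequence over $M$), while $M$ must not interfere with the indiscernibility of any row, and none of this may destroy the chosen realization $d'$ of the path. This bookkeeping is precisely what is carried out in \cite[Theorem 4.7]{Chernikov14}; everything else --- in particular the whole implication from strict invariance to the inp‑pattern --- reduces to \cref{fact:ShelahLemma} and routine manipulations of indiscernible sequences.
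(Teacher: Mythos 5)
First, a point of comparison: the paper does not prove \cref{fact:charbdn} at all --- it is imported from \cite{Chernikov14} as a black box, with only the added remark that Chernikov's argument yields a model $M\supseteq C$ rather than merely a set. So there is no in-paper proof to measure your attempt against; it has to stand on its own. Your first direction (from the strict-invariance data to an inp-pattern) is correct and complete in outline, and it is essentially the same manipulation the paper performs inside the proof of (1)$\to$(2) of \cref{thm:bdn=nipdp}: choose dividing formulas $\phi_\alpha(x,b_\alpha)$, extract $M$-indiscernible witnesses anchored at $b_\alpha$, apply \cref{fact:ShelahLemma} to make the array mutually indiscernible over $M$, and use the resulting homogeneity of paths (every path is $M$-conjugate to the $0$-path, which $d$ realizes) to get consistency.

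The converse direction, however, is not a proof. You reduce it to a ``recursive model-building construction'' and then state that the required bookkeeping is precisely what is carried out in \cite[Theorem~4.7]{Chernikov14} --- that is, you cite the statement being proved. The one concrete mathematical ingredient you offer to carry that construction is that global coheirs over a model $M$ witness $\ind^{ist}_M$. That is not a known general fact, and it is the crux of the matter: a global type finitely satisfiable in $M$ is Lascar-invariant over $M$, which gives the invariance half of \cref{def:ist}, but the strict half --- that $\tp(B/Mc)$ does not divide over $M$ whenever $c\vDash p|_B$ --- does not follow from finite satisfiability. Non-dividing must be checked against \emph{every} $M$-indiscernible sequence in the relevant type, whereas finite satisfiability only controls coheir sequences; securing this strict condition is exactly the delicate point (it is what the Chernikov--Kaplan machinery for producing strictly invariant types is designed to handle), and your sketch gives no argument for it. Until that claim is either proved or replaced by the actual construction, the implication from an inp-pattern to the strict-invariance statement remains open in your write-up.
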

\begin{proof}
    (1)$\to$ (2): Assume (1).
    Let $(b_{\alpha, i})_{\alpha<\kappa, i<\omega}$, $(\phi_\alpha(x,y_\alpha))_{\alpha<\kappa}$ and $k_\alpha<\omega$ be an inp-pattern of depth $\kappa$ in $p(x)$ with the rows of $(b_{\alpha, i})_{\alpha<\kappa, i<\omega}$ being mutually indiscernible sequences over $C$.
    For each $\alpha<\kappa$, let $q_\alpha(y_\alpha)$ be a non-algebraic global type finitely satisfiable in  $(b_{\alpha, i})_{ i<\omega}$ extending the type $\tp(b_{\alpha,0}/C).$ Let $M\supseteq C\cup(b_{\alpha, i})_{\alpha<\kappa, i<\omega}$ be a $(|C|+\kappa)^+$-saturated model. 
    Then, by \cref{fact:enoughsaturation-ist}, the type $q_\alpha(y_\alpha)$ is strictly invariant over $M$.
    For $\alpha, i<\kappa$, inductively define 
    \[c_{\alpha,i}\vDash q_\alpha(y_\alpha)|_{M\cup(c_{\alpha, j})_{\alpha<\kappa, j<i}\cup(c_{\beta, i})_{\beta<\alpha}},\]
    and define $e_\alpha:= c_{\alpha, \alpha}.$ In this way, we have the following.
    \begin{itemize}
        \item The constructed sequence $(e_\alpha)_{\alpha<\kappa}$ satisfies $e_\alpha \ind_M^{ist} e_{<\alpha}$ for each $\alpha<\kappa$, since $e_\alpha\vDash q_\alpha(y_\alpha)|_{Me_{<\alpha}}$.
        \item The collection of formulas $\theta(x)= p(x)\cup \{\phi_\alpha(x,e_{\alpha}): \alpha<\kappa\}$ is consistent: For any $\Delta\in \mathcal{L}(C)$ and increasing indices $\alpha_0<\cdots<\alpha_n<\kappa$, if $\vDash \Delta(e_{\alpha_0},\ldots,e_{\alpha_n})$, then $\vDash \Delta(b_{\alpha_0,i_0},\ldots,b_{\alpha_n,i_n})$ for some $i_0,\ldots,i_n<\omega$.
        Since $(b_{\alpha, i})_{\alpha<\kappa, i<\omega}$ is an inp-pattern in $p(x)$, any finite subset of $\theta(x)$ is consistent.
        Thus, by compactness, there is a realization $d\vDash \theta(x)$.
        \item For each $\alpha<\kappa$, the formula $\phi_\alpha(x,e_\alpha)$ divides over $M$: We show that for each $\alpha<\kappa$, the sequence $(c_{\alpha,\alpha+i})_{i<\omega}$ witnesses the dividing. This sequence is $M$-indiscernible because $c_{\alpha,\alpha+i}\vDash q_\alpha(y_\alpha)|_{M(c_{\alpha,\alpha+j})_{j<i}}$ and $q_\alpha$ is finitely satisfiable in $M$.
        As $\tp((c_{\alpha,i})_{i<\omega})$ is finitely satisfiable in $(b_{\alpha, i})_{ i<\omega}$, we conclude that $\{\phi_\alpha(x,c_{\alpha,\alpha+i}):i<\omega\}$ is $k_\alpha$-inconsistent.
    \end{itemize}

    (2)$\to$ (1): Let $d\vDash p(x)$, $M\supseteq C$, and $(e_\alpha)_{\alpha<\kappa}$ such that $e_\alpha \ind_M^{ist} e_{<\alpha}$ and $d\nind_M^d e_\alpha$ for all $\alpha<\kappa$.
    For each $\alpha<\kappa$, let the following be a witness of $d\nind_M^d e_\alpha$: a formula $\phi_\alpha(x,y_\alpha)\in \tp(d/e_\alpha M)$ and a $M$-indiscernible sequence $(e_{\alpha,i})_{i<\omega}$ with $e_{\alpha,0}=e_\alpha$. By \cref{fact:ShelahLemma}, we can find an array $(e'_{\alpha, i})_{\alpha<\kappa, i<\omega}$ with mutually indiscernible rows over $M$ and such that $(e'_{\alpha,i})_{i<\omega}\equiv_{e_\alpha M} (e_{\alpha,i})_{i<\omega}$ for each $\alpha<\kappa$. Thus, $(e'_{\alpha, i})_{\alpha<\kappa, i<\omega}$ and $(\phi_\alpha(x,y_\alpha))_{\alpha<\kappa}$ form an inp-pattern of depth $\kappa$ in $p(x).$
\end{proof}

Note that in the original theorem by Chernikov, the second equivalent condition is stated as: there exists a set $D\supseteq C$ with the desired properties. However, Chernikov's proof in fact shows that there exists a model $M\supseteq C$ with the desired properties so it is cited that way. 

Now, we prove that, under the assumption of dependent dividing, the burden and the NIP-dp rank are equivalent. Note that in the following proof, the only place that uses the assumption of dependent dividing is (1)$\to$(2). 

For a notation, given a tuple $a\in \mathfrak{C}$ and small sets $B,C\subseteq \mathfrak{C}$, we use $\niptp(a/B;C)$ to denote the collection of NIP formulas $\phi(x,b;c)$ with $|x|=|a|$, $b\in B$, $c\in C$ such that $\vDash \phi(a,b;c)$. Note that $\tp(a/B)\subseteq \niptp(a/B;C)$.
We use $\niptp(a/C)$ to denote $\niptp(a/\emptyset;C)$.
\begin{restatable}{theorem}{bdnnipdp}\label{thm:bdn=nipdp}
    Assume that $T$ has dependent dividing and
    $p(x)$ is a (partial) type over $C$. Then, the following are equivalent for any $\kappa$:
    \begin{enumerate}
        \item $\bdn(p(x))\geq \kappa$.
        \item There is an NIP-inp-pattern of depth $\kappa$ in $p(x).$
        \item There is an NIP-ict-pattern of depth $\kappa$ in $p(x).$
        \item $\nipdp(p(x))\geq \kappa$.
        \item There exist $d\vDash p(x)$, some set $D$, and $\{I_\alpha: \alpha<\kappa\}$,
    mutually NIP-indiscernible sequences over $(D;C)$ such that for all $\alpha<\kappa$, $I_\alpha$ is not NIP-indiscernible over $(D;Cd)$.
    \end{enumerate}
\end{restatable}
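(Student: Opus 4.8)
The plan. Regard (1)--(3) as ``pattern-side'' conditions and (4)--(5) as ``sequence-side'' conditions. The implication (2)$\Rightarrow$(1) is immediate, since an NIP-inp-pattern is in particular an inp-pattern. The only step that uses dependent dividing is (1)$\Rightarrow$(2); once we are inside the ``NIP world'', the equivalences among (2),(3),(4),(5) are the NIP-localizations of the classical coincidences of inp-rank, ict-rank and dp-rank in NIP theories (cf. \cite{KOU11}, \cite{Simon15}), and I would prove them by running the cycle (2)$\Rightarrow$(4)$\Rightarrow$(5)$\Rightarrow$(3)$\Rightarrow$(2). Throughout one uses freely that the class of NIP formulas is closed under Boolean combinations, adding dummy (object or parameter) variables, substituting constants, and dualizing $\phi(x;y)\mapsto\phi^{*}(y;x):=\phi(x;y)$, and that a sequence that is NIP-indiscernible over a set is a genuine indiscernible sequence (over $\emptyset$), so that the bounded-alternation characterization of NIP formulas is available to it.

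For (1)$\Rightarrow$(2): assume $\bdn(p(x))\geq\kappa$. By \cref{fact:charbdn} (in the model version noted after its statement), fix $d\vDash p$, a model $M\supseteq C$, and $(b_\alpha)_{\alpha<\kappa}$ with $b_\alpha\ind_M^{ist} b_{<\alpha}$ and $\tp(d/Mb_\alpha)$ dividing over $M$ for every $\alpha$. Applying \cref{prop:depdivchar} to the dividing type $\tp(d/Mb_\alpha)$ (this is where dependent dividing enters), choose for each $\alpha$ an NIP formula $\psi_\alpha(x;z_\alpha)$ and $c_\alpha\in Mb_\alpha$ with $\vDash\psi_\alpha(d,c_\alpha)$ and $\psi_\alpha(x,c_\alpha)$ dividing over $M$. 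Since $c_\alpha\in\dcl(Mb_\alpha)$, the sequence $(c_\alpha)_{\alpha<\kappa}$ is still strictly invariant over $M$; fix moreover, for each $\alpha$, an $M$-indiscernible $(c_{\alpha,i})_{i<\omega}$ with $c_{\alpha,0}=c_\alpha$ witnessing the dividing, so that $\{\psi_\alpha(x,c_{\alpha,i})\}_{i<\omega}$ is $k_\alpha$-inconsistent for some $k_\alpha<\omega$. Now \cref{fact:ShelahLemma}, applied to $(c_\alpha)_{\alpha<\kappa}$ and these rows, yields $(c'_{\alpha,i})_{\alpha<\kappa,i<\omega}$, mutually $M$-indiscernible, with $(c'_{\alpha,i})_i\equiv_{Mc_\alpha}(c_{\alpha,i})_i$; in particular $c'_{\alpha,0}=c_\alpha$ and each row is still $k_\alpha$-inconsistent. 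I claim $(\psi_\alpha)_\alpha$, $(c'_{\alpha,i})$, $(k_\alpha)$ is an NIP-inp-pattern in $p$. The rows are $k_\alpha$-inconsistent by construction. For a path $f:\kappa\to\omega$, mutual $M$-indiscernibility gives $(c'_{\alpha,f(\alpha)})_\alpha\equiv_M(c_\alpha)_\alpha$, so some $M$-automorphism sends $(c_\alpha)_\alpha$ to $(c'_{\alpha,f(\alpha)})_\alpha$; since $d\vDash\{\psi_\alpha(x,c_\alpha)\}_\alpha\cup p$ and $p$ is over $C\subseteq M$, the image of $d$ realizes $\{\psi_\alpha(x,c'_{\alpha,f(\alpha)})\}_\alpha\cup p$. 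This proves (2). Note that the same data, with $D:=M$ and $I_\alpha:=(c'_{\alpha,i})_i$, simultaneously witnesses (5).

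For the NIP-localized cycle: \emph{(2)$\Rightarrow$(4).} From an NIP-inp-pattern, extract by the standard argument a mutually $C$-indiscernible version $(b_{\alpha,i})$ of the rows, keeping the formulas $\phi_\alpha$, together with $d\vDash p$ realizing the path $f\equiv 0$; put $I_\alpha:=(b_{\alpha,i})_i$. The $I_\alpha$ are mutually NIP-indiscernible over $C$, and for each $\alpha$, $k_\alpha$-inconsistency gives some $i$ with $\nvDash\phi_\alpha(d,b_{\alpha,i})$ while $\vDash\phi_\alpha(d,b_{\alpha,0})$; as $\phi_\alpha^{*}(y;x)=\phi_\alpha(x,y)$ is NIP, this witnesses that $I_\alpha$ is not NIP-indiscernible over $Cd$, so $\nipdp(p)\geq\kappa$. \emph{(4)$\Rightarrow$(5):} take $D=\emptyset$. \emph{(5)$\Rightarrow$(3):} for each $\alpha$, the failure of NIP-indiscernibility of $I_\alpha$ over $(D;Cd)$ produces an NIP formula alternating along $I_\alpha$; freezing its $D$-parameters to constants (which preserves NIP) and using that each $I_\alpha$ is a genuine indiscernible sequence together with bounded alternation, one carries out the classical construction of an ict-pattern from mutually indiscernible sequences, every formula that arises (Boolean combinations, instances, duals) remaining NIP, and obtains an NIP-ict-pattern of depth $\kappa$ in $p$. \emph{(3)$\Rightarrow$(2):} given an NIP-ict-pattern $(\phi_\alpha),(b_{\alpha,i})$, extract mutually $C$-indiscernible rows, then replace $\phi_\alpha$ by the NIP formula $\phi'_\alpha(x;y^{1}y^{2}):=\phi_\alpha(x,y^{1})\wedge\neg\phi_\alpha(x,y^{2})$ and the rows by $b'_{\alpha,i}:=(b_{\alpha,2i},b_{\alpha,2i+1})$; bounded alternation of $\phi_\alpha$ along $(b_{\alpha,i})_i$ forces $\{\phi'_\alpha(x,b'_{\alpha,i})\}_{i<\omega}$ to be inconsistent, hence $k'_\alpha$-inconsistent for some $k'_\alpha<\omega$, while the ``flip'' property of the original ict-pattern passes to the new parameters, so every path remains consistent with $p$; thus $(\phi'_\alpha),(b'_{\alpha,i}),(k'_\alpha)$ is an NIP-inp-pattern of depth $\kappa$ in $p$.

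The main obstacle is the step (1)$\Rightarrow$(2). The three inputs almost, but not quite, fit together: \cref{fact:charbdn} supplies a strictly invariant sequence $(b_\alpha)$ detecting the dividing of $\tp(d/Mb_\alpha)$, but \cref{prop:depdivchar} then trades this for an NIP instance $\psi_\alpha(x,c_\alpha)$ whose parameter $c_\alpha$ lies in $Mb_\alpha$ rather than being $b_\alpha$ itself, so it is not literally in position to feed \cref{fact:ShelahLemma}. The key that unlocks the proof is the observation that replacing $b_\alpha$ by a tuple definable over $Mb_\alpha$ preserves strict invariance over $M$; then \cref{fact:ShelahLemma} upgrades the configuration to mutually $M$-indiscernible rows, and the path-consistency of the resulting NIP-inp-pattern is just an automorphism argument that uses nothing beyond $p$ being based on $C\subseteq M$. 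Everything else is routine modulo the bookkeeping that each manufactured formula ($\phi^{*}(y;x)=\phi(x;y)$, conjunctions with fresh parameter variables, substitutions of constants) stays NIP, and that ``NIP-indiscernible'' is an adequate stand-in for ``indiscernible'' wherever the classical arguments invoke bounded alternation — which it is, since NIP-indiscernibility over a set implies indiscernibility over $\emptyset$.
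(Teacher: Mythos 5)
Your overall architecture is sound and, where it overlaps with the paper, matches it: the implication (1)$\Rightarrow$(2) is carried out exactly as in the paper's proof (via \cref{fact:charbdn}, \cref{prop:depdivchar}, and \cref{fact:ShelahLemma}), and your observation that the parameter supplied by dependent dividing lies in $\dcl(Mb_\alpha)$ rather than being $b_\alpha$ itself, so that strict invariance must be checked to survive this replacement, is a detail the paper glosses over and you handle correctly. Your cycle (2)$\Rightarrow$(4)$\Rightarrow$(5)$\Rightarrow$(3)$\Rightarrow$(2) together with (2)$\Leftrightarrow$(1) is a legitimate reorganization of the paper's chain (1)$\to$(2)$\to$(3)$\to$(4)$\to$(5)$\to$(1); your (3)$\Rightarrow$(2) via $\phi'_\alpha(x;y^1y^2):=\phi_\alpha(x,y^1)\wedge\neg\phi_\alpha(x,y^2)$ is fine, since the required path-consistency of the new pattern is directly implied by a path of the original ict-pattern and needs no transfer along the sequences.

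The gap is in (5)$\Rightarrow$(3), which is precisely where the paper concentrates its technical effort (there, as the step (5)$\to$(1)). The ``classical construction of an ict-pattern from mutually indiscernible sequences'' proves path-consistency by moving the alternation point of one row at a time and invoking mutual indiscernibility to transfer the statement ``there exists a realization of $p$ satisfying this finite configuration'' to the shifted configuration. With only mutual \emph{NIP}-indiscernibility available, that transfer is legitimate only if the formula being transferred --- an \emph{existentially quantified} formula of the shape $\exists y\,\bigl(\theta(y,\overline{C})\wedge\bigwedge_j(\phi_{\alpha_j}(\cdot\,;\overline{C}y)\wedge\neg\phi_{\alpha_j}(\cdot\,;\overline{C}y))\bigr)$, partitioned with the row entries on the object side --- belongs to the relevant $\niptp$. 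Your justification, that ``every formula that arises (Boolean combinations, instances, duals) remains NIP,'' does not cover this: existential quantification of a parameter variable is not a Boolean combination, an instance, or a dual, and the class of NIP formulas is not closed under it in general. The paper confronts this head-on: it writes out the quantified formula explicitly, places the $D$-parameters and the outer segments $\overline{B_\alpha}$ on the object side of the partition (rather than freezing them as constants, as you propose), and argues membership in $\niptp(\,\cdot\,/D(I'_\beta)_{\beta\neq\alpha_0};C)$ from the NIP-ness of the quantifier-free matrix. Whatever one thinks of that final inference, your sketch simply omits the issue, so the crux of the equivalence --- recovering a pattern from condition (5) --- is not actually established by your argument as written. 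To repair it you would need either to justify that the specific existential formula used in the shifting argument is NIP in the stated partition, or to restructure the transfer so that only quantifier-free NIP formulas are moved along the sequences.
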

\begin{proof} 
(1)$\to$(2): This direction is similar to Proposition 5.9 (1) in \cite{Chernikov14}.
Assume (1).
 Let $(b_{\alpha, i})_{\alpha<\kappa, i<\omega}$, $(\phi_\alpha(x,y_\alpha))_{\alpha<\kappa}$ and $k_\alpha<\omega$ be an inp-pattern of depth $\kappa$ in $p(x)$. 
Let $T_{Sk}$ be the Skolemization of $T$.
Note that the inp-pattern of depth $\kappa$ in $p(x)$ in $T$ is still an inp-pattern in $T_{Sk}$.
Now working in $T_{Sk}$, we follow the same construction as in the proof of \cref{fact:charbdn}. We fix $d\vDash p(x)$, $M\supseteq C$, $(e_\alpha)_{\alpha<\kappa}$, and a global type $q_\alpha(y_\alpha)$ for each $\alpha<\kappa$ such that 
\begin{itemize}
    \item the type $q_\alpha(y_\alpha)$ extends $ \tp(b_{\alpha,0}/C)$ and is an heir-coheir over $M$;
    \item by defining $(e_\alpha)_{\alpha<\kappa}$ as a part of consecutive realizations of the global heir-coheir extensions over $M$, we have 
    $e_\alpha \ind_M^{ist} e_{<\alpha}$; 
    \item $d\nind_M^d e_\alpha$ for all $\alpha<\kappa$ witnessed by $\phi_\alpha$.
\end{itemize}

In order to apply dependent dividing, we define models in $T_{Sk}$ by letting 
$M_\alpha:=Sk(M\cup e_\alpha)$ for each $\alpha<\kappa$ (where $Sk(-)$ denotes the Skolem hull).
In this way, we have $M_\alpha \ind_M^{ist} M_{<\alpha}$ because the type $\tp(M_\alpha/M_{<\alpha}M)$ still has a global heir-coheir extension over $M$.

Now, we return to working in $T$.
For each $\alpha<\kappa$, we have
\begin{itemize}
    \item $M_\alpha\vDash T$ by Skolemization;
    \item $M_\alpha \ind_M^{ist} M_{<\alpha}$ because a global heir-coheir extension in $T_{Sk}$ is still a global heir-coheir extension in $T$;
    \item $d\nind_M^d M_\alpha$ as witnessed by $\phi_\alpha$.
\end{itemize}
Because $T$ has dependent dividing, for each $\alpha<\kappa$, we can fix an NIP formula $\psi_\alpha(x;c_\alpha)\in \mathcal{L}(M_\alpha)$ that witnesses $\tp(d/M_\alpha)$ dividing over $M$ with an $M$-indiscernible sequence $(c_{\alpha,i})_{i<\omega}$ (with $c_\alpha=c_{\alpha,0})$. 
By \cref{fact:ShelahLemma}, we can find an array $(c_{\alpha,i}')_{\alpha<\kappa, i<\omega}$ with mutually indiscernible rows over $M$ and with $(c_{\alpha,i}')_{i<\omega}\equiv_{c_\alpha M} (c_{\alpha,i})_{i<\omega}$ for each $\alpha<\kappa$. In this way, 
$(c_{\alpha, i}')_{\alpha<\kappa, i<\omega}$ and $\psi_\alpha(x,z_\alpha)$ (where $|z_\alpha|=|c_\alpha|$) form an
NIP-inp-pattern of depth $\kappa$ in $p(x).$

(2)$\to$(3): Assume that there is an NIP-inp-pattern of depth $\kappa$ in $p(x)$, witnessed by a sequence $(\phi_\alpha(x;y_\alpha))_{\alpha<\kappa}$ of NIP formulas and an array of tuples $(b_{\alpha,i})_{\alpha<\kappa, i<\omega}$. 
By the standard argument, we may assume that the rows of $(b_{\alpha,i})_{\alpha<\kappa, i<\omega}$ are mutually indiscernible over $C$.
By the path consistency, fix $d\vDash p(x)$ with $\vDash \phi(d,b_{\alpha,0})$ for all $\alpha<\kappa$. 
By the row inconsistency, for each $\alpha<\kappa$, there are only finitely many other $0<i<\omega$ with $\vDash \phi(d,b_{\alpha,i})$. 
Therefore, after removing finitely many $b_{\alpha,i}$ for each $\alpha$, we have $\vDash \phi(d,b_{\alpha, i})$ iff $i=0$. By the mutual indiscernibility over $C$, a sequence $(\phi_\alpha(x;y_\alpha))_{\alpha<\kappa}$ of NIP formulas and an array of tuples $(b_{\alpha,i})_{\alpha<\kappa, i<\omega}$ witness an NIP-ict-pattern of depth $\kappa$ in $p(x)$.

(3)$\to$(4): Assume that a sequence $(\phi_\alpha(x;y_\alpha))_{\alpha<\kappa}$ of NIP formulas and an array of tuples $(b_{\alpha,i})_{\alpha<\kappa, i<\omega}$
with $|y_\alpha|=|b_{\alpha,i}|$
forms an NIP-ict-pattern of depth $\kappa$ in $p(x)$. 
We may assume that the rows of $(b_{\alpha,i})_{\alpha<\kappa, i<\omega}$ are mutually NIP-indiscernible over $C$. 
Because each $\phi_\alpha(x;y_\alpha)$ is NIP, we also have $\phi_\alpha^{opp}(y_\alpha;x)$ is NIP. 
Therefore, 
there is $d\vDash p(x)$ so that
each $\phi_\alpha^{opp}(y_\alpha;x)$ witnesses that the sequence $(b_{\alpha,i})_{\alpha<\kappa, i<\omega}$ is not NIP indiscernible over $Cd$.
Thus, we have $\nipdp(p(x))\geq \kappa$.

(4)$\to$(5): immediate. 

(5)$\to$(1): Fix $d\vDash p(x)$, a set $D$, and $\{I_\alpha: \alpha<\kappa\}$ as in (5).
Without loss of generality, assume that each $I_\alpha$ is indexed by $\mathbb{Q}$, so $I_\alpha=(b_{\alpha, q})_{q\in \mathbb{Q}}.$
For each $I_\alpha$, 
because $I_\alpha$ is not NIP-indiscernible over $(D;Cd)$, 
we can find $q_\alpha^1, q_\alpha^2\in \mathbb{Q}$ 
with $q_\alpha^1< q_\alpha^2$ 
and $B_\alpha := \{b_{\alpha, q}: q<q_\alpha^1 \text{ or } q_\alpha^2< q \}$
such that 
$\niptp(b_{\alpha, q_{\alpha}^1}/B_\alpha D;C d)\neq \niptp(b_{\alpha, q_{\alpha}^2}/B_{\alpha}D;Cd)$; this is witnessed by some formula $\phi_\alpha$ and some finite subsets $\overline{B_\alpha}\subseteq B_\alpha$, $\overline{D}\subseteq {D}$, 
$\overline{C}\subseteq  {C}$ such that
\[\vDash \phi_\alpha(b_{\alpha, q_{\alpha}^1},\overline{B_\alpha}\overline{D};\overline{C} d)\land \neg \phi_\alpha(b_{\alpha, q_{\alpha}^2},\overline{B_{\alpha}D};\overline{C}d)\]
so that the partitioned formula $\phi_\alpha(z; w)$, where $|z|=|b_{\alpha, q_{\alpha}^1},\overline{B_\alpha}\overline{D}|$ and $|w|=|\overline{C} d|$, is NIP.
Let
$I_\alpha' = (b_{\alpha, q}\overline{B_\alpha DC})_{q\in \mathbb{Q}, q_\alpha^1\leq q\leq q_\alpha^2}$.

Without loss of generality, we can let $I_\alpha' = (b_{\alpha, i}\overline{B_\alpha DC})_{i\in \omega}$ with $\vDash \phi_\alpha(b_{\alpha, 0}\overline{B_\alpha DC},d)\land \neg \phi_\alpha(b_{\alpha, 1}\overline{B_\alpha DC},d)$. 
For each $\alpha<\kappa$, let
$\psi_\alpha(y, x_1,x_2):= \phi_\alpha(x_1, y)\land \neg \phi_\alpha(x_2, y)$ (where $|y|=|d|$) and define $J_\alpha := (b_{\alpha, 2i}\overline{B_\alpha DC}, b_{\alpha, 2i+1}\overline{B_\alpha DC})_{i<\omega}$.
Now, we show that $\psi_\alpha(y,x_1,x_2)$ and $(J_\alpha)_{\alpha<\kappa}$ form an inp-pattern of depth $\kappa$.
Because each $\phi_\alpha(b_{\alpha, 0}\overline{B_\alpha D};\overline{C},d)$ is NIP, we know that it has finite alternation rank. Since NIP-indiscernibility over $(D;C)$ implies indiscernibility over $D$, $(b_{\alpha,i}\overline{B_\alpha D})_{i<\omega} $ is an indiscernible sequence, and thus
\[\{\psi_\alpha(y, b_{\alpha, 2i}\overline{B_\alpha DC}, b_{\alpha, 2i+1}\overline{B_\alpha DC}): i<\omega\}\]
is $k_\alpha$-inconsistent for every $\alpha$.
For any path $f: \kappa\to \omega$, consider the set  
\[\pi(y)=
    p(y)\cup \{\psi_\alpha(y, b_{\alpha, 2f(\alpha)}\overline{B_\alpha DC}, b_{\alpha, 2f(\alpha)+1}\overline{B_\alpha DC}): \alpha<\kappa\}.
\]
Fix a finite subset $\pi_0(y)\subseteq \pi(y)$, then 
\begin{multline*}
    \pi_0(y)=\theta(y,\overline{C})\land \psi_{\alpha_0}(y, b_{{\alpha_0}, 2f({\alpha_0})}\overline{B_{\alpha_0} DC}, b_{{\alpha_0}, 2f({\alpha_0})+1}\overline{B_{\alpha_0} DC})\\\land\cdots\land \psi_n(y, b_{{\alpha_n}, 2f({\alpha_n})}\overline{B_{\alpha_n} DC}, b_{{\alpha_n}, 2f({\alpha_n})+1}\overline{B_{\alpha_n} DC})
\end{multline*}
where $\theta(y,\overline{C})\in p(y)$ with $\overline{C}\subseteq C$ and ${\alpha_0},...,{\alpha_n}<\kappa$. 
Note that the formula
\begin{multline*}
    \exists y\theta(y,\overline{C})\land \phi_{\alpha_0}(b_{{\alpha_0}, 0}\overline{B_{\alpha_0} D} ;\overline{C}y)\land \neg \phi_{\alpha_0}(b_{{\alpha_0}, 1}\overline{B_{\alpha_0} D}; \overline{C}y)\\\land \cdots \land\phi_{\alpha_n}(b_{{\alpha_n}, 0}\overline{B_{\alpha_n} D} ;\overline{C}y)\land \neg \phi_{\alpha_n}(b_{{\alpha_n}, 1}\overline{B_{\alpha_n} D}; \overline{C}y)
\end{multline*}
is in $\niptp(b_{{\alpha_0}, 0}b_{{\alpha_0},1}\overline{B_{\alpha_0}}/D(I_\beta')_{\beta\neq {\alpha_0}};C)$ (if we replace the instances of $b_{{\alpha_0}, 0}b_{{\alpha_0},1}\overline{B_{\alpha_0}}$ with free variables) because \[\phi_{\alpha_0}(b_{{\alpha_0}, 0}\overline{B_{\alpha_0} D} ;\overline{C}y)\land \neg \phi_{\alpha_0}(b_{{\alpha_0}, 1}\overline{B_{\alpha_0} D}; \overline{C}y)\\\land \cdots \land\phi_{\alpha_n}(b_{{\alpha_n}, 0}\overline{B_{\alpha_n} D} ;\overline{C}y)\land \neg \phi_{\alpha_n}(b_{{\alpha_n}, 1}\overline{B_{\alpha_n} D}; \overline{C}y)\] is NIP.
Because $\{I_\alpha: \alpha<\kappa\}$ is mutually NIP indiscernible over $(D;C)$, $I_{\alpha_0}$ is NIP-indiscernible over $(D(I_\beta')_{\beta\neq {\alpha_0}}; C)$, so
\begin{multline*}
    \vDash \exists y\theta(y,\overline{C})\land \phi_{\alpha_0}(b_{{\alpha_0}, 2f(\alpha_0)}\overline{B_{\alpha_0}D} ;\overline{C}y)\land \neg \phi_{\alpha_0}(b_{{\alpha_0}, 2f(\alpha_0)+1}\overline{B_{\alpha_0}D}; \overline{C}y)
    \\\land \cdots \land\phi_{\alpha_n}(b_{{\alpha_n}, 0}\overline{B_{\alpha_n}D} ;\overline{C}y)\land \neg \phi_{\alpha_n}(b_{{\alpha_n}, 1}\overline{B_{\alpha_n}D}; \overline{C}y)
\end{multline*}
as well. If we repeat this process for the rest of the rows ${\alpha_1},...,{\alpha_n}<\kappa$, then we can conclude that $\pi_0(y)$ is consistent. 
\end{proof}

\section{Dependent Dividing and Sub-additivity of Burden}\label{sec:subaddbdn}
In this section, we show that dependent dividing implies sub-additivity of burden, following the proof of the sub-additivity of dp-rank in \cite{KOU11}.
We start by stating known facts.
\begin{fact}
    In NIP theories, the burden agrees with the dp-rank \cite[Proposition 10]{Adler07}.
    Because the dp-rank is sub-additive by \cite{KOU11}, the burden is sub-additive in NIP theories.
\end{fact}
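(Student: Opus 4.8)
The plan is to move everything to the NIP dp-rank through \cref{thm:bdn=nipdp} and then replay the proof of sub-additivity of the dp-rank from \cite{KOU11}, with ``NIP-indiscernible'' in place of ``indiscernible'' throughout. I would argue by contradiction. Assume $\bdn(a_1a_2/A)\not\le k_1+k_2$; since $k_1+k_2+1$ is a successor ordinal, nothing of the form $\kappa_-$ lies strictly between $k_1+k_2$ and $k_1+k_2+1$, so this assumption says exactly that there is an NIP-inp-pattern of depth $k_1+k_2+1$ in $\tp(a_1a_2/A)$, that is --- by \cref{thm:bdn=nipdp} --- that $\nipdp(\tp(a_1a_2/A))\ge k_1+k_2+1$. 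By homogeneity of $\mathfrak{C}$ I may take the realization in \cref{def:nip-dprank} to be $d=a_1a_2$ itself, so I obtain mutually NIP-indiscernible sequences $\{I_\alpha:\alpha<k_1+k_2+1\}$ over $A$, none of them NIP-indiscernible over $Aa_1a_2$; a preliminary extraction (of the kind used in \cref{sec:prelim}, exploiting the usual closure properties of NIP formulas under Boolean combinations and opposites) lets me take each $I_\alpha$ indexed by $\mathbb{Q}$ and suitably spread out.

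I would then split the index set as $S_1\sqcup S_2$, with $S_1=\{\alpha:I_\alpha\text{ is not NIP-indiscernible over }Aa_1\}$. The subfamily $\{I_\alpha:\alpha\in S_1\}$ is still mutually NIP-indiscernible over $A$, and with realization $d=a_1$ it witnesses $\nipdp(\tp(a_1/A))\ge|S_1|$, hence $\bdn(a_1/A)\ge|S_1|$ by \cref{thm:bdn=nipdp}; so $|S_1|\le k_1$, and therefore $|S_2|\ge k_2+1$. For $\alpha\in S_2$ the sequence $I_\alpha$ is NIP-indiscernible over $Aa_1$ but not over $Aa_1a_2$, and $\{I_\alpha:\alpha\in S_2\}$ is mutually NIP-indiscernible over $A$. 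My plan is to upgrade this family to one that is \emph{mutually} NIP-indiscernible over $Aa_1$ with no member NIP-indiscernible over $Aa_1a_2$; then, taking the realization $d=a_2$ over the base $Aa_1$, it witnesses $\nipdp(\tp(a_2/Aa_1))\ge|S_2|\ge k_2+1$, so $\bdn(a_2/Aa_1)\ge k_2+1$ by \cref{thm:bdn=nipdp} applied over $Aa_1$. Since any NIP-inp-pattern in $\tp(a_2/Aa_1)$ is a fortiori one in $\tp(a_2/A)$ --- enlarging the base only strengthens the path-consistency demand --- this gives $\bdn(a_2/A)\ge k_2+1$, contradicting $\bdn(a_2/A)\le k_2$.

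Everything thus reduces to the upgrade step, which is the NIP-indiscernibility analogue of the main lemma of \cite{KOU11}. I would carry it out by the usual stretch-and-extract argument: replace each $I_\alpha$ ($\alpha\in S_2$) by a long sequence with the same relevant type (still mutually NIP-indiscernible over $A$, still NIP-indiscernible over $Aa_1$), mark in each such sequence a pair of finite same-length increasing tuples from its index set on which some NIP formula with parameters from $Aa_1a_2$ witnesses the failure of NIP-indiscernibility over $Aa_1a_2$, and then --- holding $a_1$ and $a_2$ fixed --- apply an Erd\H{o}s--Rado extraction to make the stretched sequences mutually NIP-indiscernible over $Aa_1$ while keeping the marked witnessing tuples inside them. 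I expect this to be the main obstacle: one has to reconcile the two layers of NIP-indiscernibility --- the mutual indiscernibility over $A$ that is already present and the individual indiscernibility over $Aa_1$ --- without destroying a witness that mentions $a_2\notin Aa_1$. The rest is bookkeeping, and the $\kappa_-$ cases never arise here because the relevant ordinals are all finite successors.
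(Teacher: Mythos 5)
You are proving far more than the paper does here: this Fact is established purely by citation --- burden equals dp-rank in NIP theories by \cite{Adler07}, and dp-rank is sub-additive by \cite{KOU11} --- and in an NIP theory your entire apparatus collapses to the classical one, since dependent dividing holds trivially (every formula is NIP), NIP-indiscernibility is just indiscernibility, and \cref{thm:bdn=nipdp} reduces to Adler's equivalence. What you have actually sketched is a proof of \cref{thm:subadditivebdn}, i.e.\ a re-derivation of the KOU11 sub-additivity theorem, and that re-derivation has a genuine gap precisely at the step you flag as ``the main obstacle.''

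The gap is the upgrade step: you need the family $\{I_\alpha:\alpha\in S_2\}$ --- the \emph{same} sequences, not replacements --- to become mutually NIP-indiscernible over $Aa_1$ while each member remains non-NIP-indiscernible over $Aa_1a_2$. An Erd\H{o}s--Rado extraction cannot deliver this: extracting over $Aa_1$ produces new sequences realizing only the EM-type of the old ones over $Aa_1$, and the failure of indiscernibility over $Aa_1a_2$ is witnessed by specific tuples together with the external parameter $a_2$, none of which is recorded in that EM-type; extracting over $Aa_1a_2$ instead yields sequences indiscernible over $Aa_1a_2$ and destroys every witness. The working substitute is the average-type continuation argument: extend each sequence by a tail of $\mathcal{U}$-average realizations (\cref{prop:extendindisc}, \cref{cor:extendindisc}), which preserves every indiscernibility property already present, and then relocate the offending finite parameter tuple into the continuation (\cref{lem:minlem}, \cref{lem:fin}) so that the counting induction behind \cref{prop:finprop} closes. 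That machinery occupies most of \cref{sec:subaddbdn} and is not bookkeeping; until you supply it --- or simply cite \cite{KOU11}, as the paper does for this Fact --- the argument is incomplete.
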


\begin{fact} 
    In simple theories, the burden of a partial type is the supremum of the weights of its complete extensions. 
    Because the weight is sub-additive by Chapter XIII, Theorem 2.12 in \cite{Baldwin17}, the burden is sub-additive in simple theories.
\end{fact}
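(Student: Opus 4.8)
The plan is to read the characterization of burden in \cref{fact:charbdn} through the dictionary of simple theories, where it collapses to the usual pre-weight, and then to feed the cited sub-additivity of weight into the resulting identity. Since both the burden and the weight of a partial type are suprema over their completions, it suffices to fix a single complete type $q=\tp(d/C)$ and prove $\bdn(q)=w(q)$; the partial-type statement then follows by taking suprema, and the $\kappa_-$ boundary cases are handled by the usual bookkeeping (\cref{def:bdn}). The weight of the complete type $q$ will be computed, as is standard in simple theories, after passing to a nonforking extension $\tp(d/M)$ over a model $M\supseteq C$, so the task reduces to matching, for such $M$ and $d$, the depth of inp-patterns realized by $d$ over $M$ with the pre-weight of $\tp(d/M)$. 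The key observation is that \cref{fact:charbdn} already states that $\bdn(q)\geq\kappa$ is equivalent to the existence of $d\models q$, a model $M\supseteq C$, and $(b_\alpha)_{\alpha<\kappa}$ with $b_\alpha\ind^{ist}_M b_{<\alpha}$ and $d\nind^d_M b_\alpha$ for all $\alpha$.

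The heart of the argument is to show that in a simple theory this data is exactly a pre-weight witness. Two translations are needed, both over a model $M$. First, $d\nind^d_M b_\alpha$ says that $\tp(d/Mb_\alpha)$ divides over $M$; since simple theories are $\ntptwo$, \cref{fact:forking=dividing} identifies dividing with forking over the model $M$, so this is precisely forking-dependence $d\nind_M b_\alpha$. Second, strict invariance and $M$-independence coincide over a model: the implication $b_\alpha\ind^{ist}_M b_{<\alpha}\Rightarrow b_\alpha\ind_M b_{<\alpha}$ is automatic, because a Lascar-invariant global extension does not fork over $M$, while the converse --- that an $M$-independent (Morley) sequence can be taken strictly invariant --- uses that over a model in a simple theory the canonical nonforking global extension is Lascar-invariant and, by symmetry of forking, strict. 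Hence the sequence $(b_\alpha)_{\alpha<\kappa}$ furnished by \cref{fact:charbdn} is an $M$-independent family with $d\nind_M b_\alpha$ for all $\alpha$, which is exactly a witness that the pre-weight of $\tp(d/M)$ is $\geq\kappa$.

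With the dictionary in place, both inequalities follow. For $\bdn(q)\leq w(q)$, I read \cref{fact:charbdn} forward: the data it provides is, by the previous paragraph, a pre-weight witness, so some nonforking extension $\tp(d/M)$ has pre-weight $\geq\kappa$, giving $w(q)\geq\kappa$. For the reverse, given a nonforking extension of pre-weight $\geq\kappa$ I would first extract, by Ramsey and compactness, an $M$-independent \emph{Morley} sequence of witnesses (so that it is strictly invariant by the equivalence above), choose for each $\alpha$ a formula $\phi_\alpha(x,b_\alpha)\in\tp(d/Mb_\alpha)$ dividing over $M$ (forking $=$ dividing again), and then invoke \cref{fact:ShelahLemma} and \cref{fact:Kimslemma} exactly as in the proof of \cref{thm:bdn=nipdp} to turn the rows into a depth-$\kappa$ inp-pattern in $q$; this is precisely the packaging that \cref{fact:charbdn} performs in reverse. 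Finally, the sub-additivity claim is immediate from the identity $\bdn(a_i/A)=w(a_i/A)$: Theorem 2.12 of \cite{Baldwin17} gives $w(a_1a_2/A)\leq w(a_1/A)+w(a_2/A)$, whence $\bdn(a_1a_2/A)\leq \bdn(a_1/A)+\bdn(a_2/A)$.

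I expect the main obstacle to be the equivalence ``strictly invariant over a model $\iff$ Morley over a model'' in simple theories, together with the precise reconciliation of the weight used by \cite{Baldwin17} (defined via nonforking extensions) with the pre-weight that emerges from \cref{fact:charbdn}: one must verify that passing to a nonforking extension over a sufficiently saturated model $M$ realizes the supremal pre-weight, so that the quantity extracted from the burden characterization is genuinely $w(q)$ and not merely a lower bound. The forking/dividing translations and the Morley-sequence extraction are routine in simple theories, but this matching of the two notions of weight is where care is required.
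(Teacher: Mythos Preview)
The paper does not prove this statement; it is recorded as a known fact, with the sub-additivity of weight attributed to \cite{Baldwin17}. There is therefore no paper-side argument to compare your proposal against --- you are supplying a proof where the paper merely cites one.

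Your plan to read \cref{fact:charbdn} through the simple-theory dictionary is reasonable, and the direction $\bdn(q)\ge\kappa\Rightarrow w(q)\ge\kappa$ goes through as you describe. The gap is in the converse, at the sentence ``over a model in a simple theory the canonical nonforking global extension is Lascar-invariant.'' Over a model $M$, Lascar-invariance coincides with $M$-invariance, and a global nonforking extension of a type over $M$ is $M$-invariant only when that type is stationary; in any simple unstable theory (already the random graph) this fails, and in particular a nonforking global extension of $\tp(b_\alpha/Mb_{<\alpha})$ need not be $M$-invariant. Symmetry of forking does yield $b_\alpha\ind_M b_{<\alpha}\Rightarrow b_\alpha\ind^{st}_M b_{<\alpha}$, but not $b_\alpha\ind^{ist}_M b_{<\alpha}$, so an arbitrary Morley sequence cannot be fed back into the strictly-invariant clause of \cref{fact:charbdn} as written.

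The direction $w(q)\ge\kappa\Rightarrow\bdn(q)\ge\kappa$ is of course true, but it is cleaner to build the inp-pattern directly from a weight witness rather than route through strict invariance: given an $M$-independent $(b_\alpha)_{\alpha<\kappa}$ with $d\nind_M b_\alpha$, choose dividing formulas $\phi_\alpha(x,b_\alpha)$ and Morley sequences in each $\tp(b_\alpha/M)$; Kim's original lemma gives row-inconsistency, and the independence theorem over $M$ lets you arrange the rows to be mutually $M$-indiscernible while fixing the first column, which yields path consistency. Your closing worry about matching pre-weight with the weight of \cite{Baldwin17} is the genuine bookkeeping point, and is handled as you anticipate by passing to a nonforking extension over a sufficiently saturated model.
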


\begin{fact}[\cite{Chernikov14}, Corollary 2.6]
    In any theory, the burden is sub-multiplicative.
\end{fact}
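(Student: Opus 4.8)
The plan is to give a purely combinatorial argument at the level of inp-patterns, since the statement holds in \emph{every} theory and therefore cannot use forking, dividing, or any NIP input. Concretely I would prove the sharp finite form: if $\bdn(a_1/A)\leq n_1$ and $\bdn(a_2/Aa_1)\leq n_2$ with $n_1,n_2<\omega$, then there is no inp-pattern of depth $(n_1+1)(n_2+1)$ in $\tp(a_1a_2/A)$, which is exactly the multiplicative bound. (For types of infinite burden the sub-multiplicative and sub-additive bounds coincide, and that case is immediate, so all the content is finite.) First I would take a hypothetical inp-pattern $(\phi_\alpha(x_1x_2;y_\alpha),(b_{\alpha,i})_{i<\omega},k_\alpha)_{\alpha<\kappa}$ of large depth in $p=\tp(a_1a_2/A)$, apply the usual Ramsey extraction to make the array mutually $A$-indiscernible, and then thin the columns exactly as in the (2)$\to$(3) step of \cref{thm:bdn=nipdp}, fixing a realization $d=d_1d_2\models p$ along the zero path so that $\models\phi_\alpha(d_1,d_2,b_{\alpha,i})$ holds iff $i=0$.

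The core device is a \emph{localization dichotomy} obtained by freezing one coordinate of $d$. For each row $\alpha$ consider the two families $\{\phi_\alpha(x_1,d_2,b_{\alpha,i})\}_{i<\omega}$ and $\{\phi_\alpha(d_1,x_2,b_{\alpha,i})\}_{i<\omega}$. Inconsistency is inherited for free: a common realization of $k_\alpha$ formulas in, say, the first family would, together with $x_2=d_2$, be a common realization of $k_\alpha$ formulas of the original row, contradicting $k_\alpha$-inconsistency; so each frozen family is again $k_\alpha$-inconsistent. Consequently, any set of rows that, with $x_2$ frozen to $d_2$, is moreover path-consistent with $\tp(d_1/Ad_2)$ forms an inp-pattern in $\tp(d_1/Ad_2)$, hence has size at most $\bdn(d_1/Ad_2)\leq\bdn(a_1/A)\leq n_1$ (using monotonicity of burden in the base); symmetrically, the rows localizable in the $x_2$-direction over $Ad_1$ number at most $\bdn(d_2/Ad_1)=\bdn(a_2/Aa_1)\leq n_2$, the equality coming from an $A$-automorphism sending $d_1d_2$ to $a_1a_2$.

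The subtlety — and the reason the bound is only multiplicative — is that \emph{path-consistency does not localize}. The original path-consistency supplies, for each path $f$, some realization $(e_1,e_2)$ of $p$ whose coordinates vary with $f$; freezing $x_1=d_1$ or $x_2=d_2$ need not preserve it. A row may therefore be \emph{joint}: neither $x_1$-localizable over $Ad_2$ nor $x_2$-localizable over $Ad_1$. If every row localized to a single coordinate we would obtain $\kappa\leq n_1+n_2$, i.e.\ sub-\emph{additivity} — which is precisely the open conjecture — so the argument is necessarily lossy on the joint rows. To recover the product I would run a two-dimensional recursion: peel off the (at most $n_2$) rows localizable in the $x_2$-direction for the current realization, pass to the joint rows, and re-localize them against a fresh realization supplying a new $x_1$-direction, repeating; the successive realizations are arranged, by a simultaneous Ramsey/compactness bookkeeping, into a single inp-pattern in $\tp(a_1/A)$, so the recursion has at most $\bdn(a_1/A)+1=n_1+1$ stages. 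Multiplying the per-stage count $n_2$ by the $n_1+1$ stages gives $\kappa<(n_1+1)(n_2+1)$.

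The main obstacle is exactly this last step: guaranteeing that the realizations used to re-localize the joint rows across stages genuinely assemble into an $a_1$-pattern (so that the number of stages is bounded by $n_1+1$), while the $x_2$-localizable rows extracted at each stage remain simultaneously path-consistent. This is the non-localization of path-consistency converted into a counting bound, and it is where all of the combinatorial work of Chernikov's Corollary~2.6 sits; the clean additive split is unavailable precisely here, which is why only the weaker multiplicative inequality is provable in full generality.
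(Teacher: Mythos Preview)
The paper does not prove this statement at all: it is recorded as a Fact with a citation to \cite{Chernikov14}, Corollary~2.6, and is used only as background for the sub-additivity conjecture. There is therefore no proof in the paper to compare your proposal against.

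On the proposal itself: your setup is fine, and you correctly isolate the real obstruction --- row $k_\alpha$-inconsistency survives freezing a coordinate, but path-consistency does not, so one cannot simply split the rows into an $x_1$-pattern and an $x_2$-pattern. The gap is exactly where you say it is. The ``two-dimensional recursion'' (peel off the $x_2$-localizable rows, pass to joint rows, re-localize against a \emph{fresh} realization, repeat, and assemble the fresh realizations into an inp-pattern in $\tp(a_1/A)$) is asserted, not carried out. You give no mechanism for producing the fresh realizations, no reason the joint rows become localizable for them, and no argument that the realizations across stages cohere into a single inp-pattern rather than interfere. Your last paragraph is an honest admission that the proof is not yet a proof.

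For what it is worth, Chernikov's actual argument does not run a recursion of this shape. He partitions the $\kappa\cdot\lambda$ rows into $\kappa$ blocks of size $\lambda$; since $\bdn(b/Ca)<\lambda$, each block (with $x_1$ frozen to $a$) fails path-consistency, and from each failure one extracts a single \emph{derived} row for $a$ over $C$ by existentially quantifying out $y$ over a suitable finite conjunction along the block. The $\kappa$ derived rows are then checked directly to form an inp-pattern in $\tp(a/C)$, using the mutual indiscernibility of the original array for path-consistency. That existential-collapse step is the missing idea in your sketch.
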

Based on these theorems, Chernikov conjectured that the burden is sub-additive in $\ntptwo$ theories \cite[Conjecture 2.7]{Chernikov14}.

\subsection{Sub-additivity of NIP dp-rank: NIP dp-minimal}
We start with the NIP dp-minimal case. The theorem follows from \cref{thm:bdn=nipdp} and replacing ``indiscernible" in the proof in \cite{KOU11} with ``NIP-indiscernible" appropriately. We start by recalling the definition of an average type. 
\begin{definition}\label{def:avetype}
    Let $I=\langle a_i\rangle$ be an indiscernible sequence and $\mathcal{U}$ an ultrafilter on the index set of $I$. 
    Given any set $B$, we define $Avg_\mathcal{U}(I,B)$, the average type of $I$ over $B$ given by $\mathcal{U}$, as the unique complete type $p(x)$ such that for every formula $\varphi(x,y)$ and $b\in B$, we have
    \[\varphi(x,b)\in p(x)\iff \{i:\vDash \varphi(a_i,b)\}\in \mathcal{U}.\]
\end{definition}

\begin{proposition}\label{prop:extendindisc}
    Let $I$ be an infinite indiscernible sequence, indexed by an order with no last element, and let $B,C$ be any set. Then, for any indexing set $\lambda$, there is an indiscernible sequence $I^\ast$ indexed by $\lambda$ so that
$I^\frown I^\ast$ is NIP-indiscernible over $(B';C')$ whenever $B'\subseteq B$ and $C'\subseteq C$ are sets
such that $I$ was already NIP-indiscernible over $(B';C')$.
\end{proposition}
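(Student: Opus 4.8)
The plan is to obtain $I^{*}$ by a single compactness argument, realizing the new entries ``inside the tail of $I$'' at each finite stage, so that no invariant-type or limit-type machinery is needed. Write $I=(a_{i})_{i\in\mathcal{I}}$ where the linear order $\mathcal{I}$ has no last element, regard $\lambda$ as a linear order, and think of $I^{\frown}I^{*}$ as indexed by $\mathcal{I}+\lambda$ with $\mathcal{I}$ an initial segment.

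First I would write down the partial type $\Sigma\big((x_{t})_{t\in\lambda}\big)$, over $I\cup B\cup C$, whose realizations are exactly the admissible $I^{*}$: for each pair $(B',C')$ with $B'\subseteq B$, $C'\subseteq C$ over which $I$ is already NIP-indiscernible, each NIP partitioned formula $\phi(z_{0},\dots,z_{n},u;w)$ with $|z_{0}|=\dots=|z_{n}|=|a_{i}|$, each tuple $b\in B'$ of sort $u$, each tuple $c\in C'$ of sort $w$, and each pair of increasing $(n+1)$-tuples $\bar t,\bar t'$ from $\mathcal{I}+\lambda$, put into $\Sigma$ the formula
\[
\phi\big(y_{t_{0}},\dots,y_{t_{n}},b;c\big)\leftrightarrow \phi\big(y_{t'_{0}},\dots,y_{t'_{n}},b;c\big),
\]
where $y_{t}$ denotes $a_{t}$ if $t\in\mathcal{I}$ and the variable $x_{t}$ if $t\in\lambda$. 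Since parameter-free formulas are NIP and $I$ is NIP-indiscernible over $(\emptyset;\emptyset)$ (this is just its indiscernibility), the instances coming from $(B',C')=(\emptyset,\emptyset)$ already force $I^{\frown}I^{*}$ to be indiscernible, hence $I^{*}$ to be an indiscernible sequence; the remaining instances say precisely that $I^{\frown}I^{*}$ is NIP-indiscernible over each such $(B';C')$. So any realization of $\Sigma$ is the sequence we want, and everything reduces to the consistency of $\Sigma$.

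For consistency I would apply compactness: fix a finite $\Sigma_{0}\subseteq\Sigma$; it mentions only finitely many indices from $\mathcal{I}$, finitely many variables $x_{j_{1}},\dots,x_{j_{m}}$ (with $j_{1}<\dots<j_{m}$ in $\lambda$), and finitely many parameters from $B\cup C$. This is the one place where the hypothesis that $\mathcal{I}$ has no last element is used: choose $i_{1}<\dots<i_{m}$ in $\mathcal{I}$ lying strictly above every $\mathcal{I}$-index occurring in $\Sigma_{0}$, and interpret $x_{j_{k}}$ as $a_{i_{k}}$. Because the $i_{k}$ are above all the named $\mathcal{I}$-indices and $\mathcal{I}$ is an initial segment of $\mathcal{I}+\lambda$, every increasing tuple appearing in $\Sigma_{0}$ is sent, under $t\mapsto y_{t}$, to a genuine increasing tuple from $I$ of the same length. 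Thus each biconditional in $\Sigma_{0}$ becomes a comparison of the value of an NIP formula, with parameters drawn from a pair $(B',C')$ over which $I$ is NIP-indiscernible (or with no parameters at all), on two increasing tuples from $I$, and so it holds by the (NIP-)indiscernibility of $I$ itself. Hence $\Sigma_{0}$ is satisfiable, $\Sigma$ is consistent (if $\lambda$ exceeds the saturation of the ambient monster, realize $\Sigma$ in a larger model), and any realization yields the desired $I^{*}$.

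I do not expect a serious obstacle here. The only thing to be careful about is the order bookkeeping, namely checking that after the substitution $x_{j_{k}}\mapsto a_{i_{k}}$ the increasing tuples of $\mathcal{I}+\lambda$ really map to increasing tuples of $\mathcal{I}$, and making sure that every formula placed in $\Sigma$ is NIP with parameters from a pair over which $I$ is already NIP-indiscernible, so that the NIP-indiscernibility of $I$ may legitimately be invoked at the finite stage. An alternative would build $I^{*}$ as a Morley sequence of the average type $Avg_{\mathcal{U}}(I,\mathfrak{C})$ for an ultrafilter $\mathcal{U}$ on $\mathcal{I}$ containing all final segments of $\mathcal{I}$, but then showing the concatenation is (NIP-)indiscernible requires an extra induction that the compactness argument avoids.
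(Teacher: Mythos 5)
Your proof is correct, but it takes a genuinely different route from the paper's. The paper builds $I^\ast$ one element at a time as realizations of average types $Avg_{\mathcal{U}}(I,\cdot)$ for an ultrafilter $\mathcal{U}$ concentrating on final segments of $I$, proves by induction on $n$ that each finite continuation $I^\frown\langle a_n^\ast,\ldots,a_0^\ast\rangle$ preserves NIP-indiscernibility over every admissible pair $(B';C')$, and only then invokes compactness to stretch to an arbitrary $\lambda$ -- exactly the ``alternative'' you mention and set aside at the end. Your one-shot compactness argument, witnessing the finitely many new variables by elements in the tail of $I$, avoids both the average-type machinery and the induction, and it isolates cleanly the one place where ``no last element'' is needed (to find $i_1<\cdots<i_m$ above all named indices so that increasing tuples of $\mathcal{I}+\lambda$ map to increasing tuples of $\mathcal{I}$). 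The key points you flag are indeed the ones that need checking: that every biconditional placed in $\Sigma$ uses an NIP formula with parameters from a pair $(B';C')$ over which $I$ is already NIP-indiscernible, and that the pair $(\emptyset,\emptyset)$ forces indiscernibility of $I^\frown I^\ast$ since parameter-free formulas are NIP. Both check out against the paper's definitions. What the paper's construction buys is a concrete, canonical continuation (each $a_{n+1}^\ast$ realizes a specific average type over everything built so far), which is sometimes reusable elsewhere; what yours buys is brevity and directness, since the paper must in any case finish with a compactness step to reach arbitrary $\lambda$, whereas you front-load that single application of compactness.
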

\begin{proof}
    Let $I=\langle a_i:i\in \mathcal{I}\rangle$.
    Let $\mathcal{U}$ be an ultrafilter over $I$ such that every set in $\mathcal{U}$ is unbounded in $I$.
    We inductively define 
    $a_0^\ast:= Avg_\mathcal{U}(I, BCI)$ and 
    \[a_{n+1}^\ast:= Avg_\mathcal{U}(I, BCI^\frown\langle a_n^\ast,a_{n-1}^\ast,\ldots,a_0^\ast\rangle).\]
    First, we show that $I^\frown\langle a_0^\ast \rangle$ is NIP-indiscernible over $(B';C')$ whenever $I$ was NIP-indiscernible over $(B';C').$
    Let $\phi(x_0,\ldots,x_n,y;z)$ be an NIP formula with $|x_0|=\cdots=|x_n|=|a_i|$.
    Then for any tuple $b'\in B'$ of length ${|y|}$, tuple $c'\in C'$ of length $|z|$, and $i_0<\ldots<i_n$ and $ j_0<\ldots<j_{n}$ from $ \mathcal{I}$, we have \[\vDash \phi(a_{i_0},\ldots a_{i_{n}},b'; c')\leftrightarrow \phi(a_{j_0},\ldots, a_{j_n},b'; c').\]
    Then, for any $i_0<\ldots<i_{n-1}$ and $ j_0<\ldots<j_{n-1}$ from $ \mathcal{I}$, we have 
    \[\vDash \phi(a_{i_0},\ldots a_{i_{n-1}},a_0^\ast,b'; c')\leftrightarrow \phi(a_{j_0},\ldots, a_{j_{n-1}}, a_0^\ast,b'; c')\]
    because
    $ \phi(a_{i_0},\ldots a_{i_{n-1}},x,b'; c')\leftrightarrow \phi(a_{j_0},\ldots, a_{j_{n-1}}, x,b'; c')$
    is satisfied by unboundedly many elements in $I$.

    Inductively, assume $I^\frown \langle a_n^\ast,a_{n-1}^\ast,\ldots,a_0^\ast\rangle$ is NIP-indiscernible over $(B';C')$ whenever $I$ was NIP-indiscernible over $(B';C')$.
    Then, by a very similar argument, it follows that $I^\frown \langle a_{n+1}^\ast, a_n^\ast,a_{n-1}^\ast,\ldots,a_0^\ast\rangle$ is NIP-indiscernible over $(B';C')$ whenever $I$ was NIP-indiscernible over $(B';C')$.
    
    This construction gives us an arbitrarily long finite continuation $I^\ast$ of the original sequence $I$ with the desired property. By compactness, for any indexing set $\lambda$, a continuation $I^\ast$ with the desired property exists. 
\end{proof}

\begin{corollary}\label{cor:extendindisc}
    Let $I$ and $J$ be infinite mutually NIP-indiscernible sequences over $(B';C')$.
    Let $I^\ast$ be as in \cref{prop:extendindisc} with some sets $B\supseteq JB'$ and $C\supseteq C'$. 
    Then, $I^\frown I^\ast$ and $J$ are mutually NIP-indiscernible over $(B';C')$.
\end{corollary}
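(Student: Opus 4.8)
The plan is to unwind the definition of mutual NIP-indiscernibility into its two halves: it suffices to show (a) that $I^\frown I^\ast$ is NIP-indiscernible over $(B'J;C')$ and (b) that $J$ is NIP-indiscernible over $(B'(I^\frown I^\ast);C')$. Part (a) is immediate from \cref{prop:extendindisc}: mutual NIP-indiscernibility of $I$ and $J$ over $(B';C')$ says in particular that $I$ is NIP-indiscernible over $(B'J;C')$, and since $B'J\subseteq B$ and $C'\subseteq C$ the conclusion of \cref{prop:extendindisc} gives (a). The content is (b), and for it \cref{prop:extendindisc} cannot be quoted as a black box — its statement says nothing about $J$ — so I would re-open its proof.

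In that proof the finite approximations to $I^\ast$ are the tuples $\langle a_n^\ast,\ldots,a_0^\ast\rangle$ with $a_0^\ast = Avg_\mathcal{U}(I,BCI)$ and $a_{n+1}^\ast = Avg_\mathcal{U}(I,BCI^\frown\langle a_n^\ast,\ldots,a_0^\ast\rangle)$; the key point is that, because $B\supseteq B'J$, each of these average types is taken over a set containing $J$, $I$, $C'$ and the previously built $a_k^\ast$'s. I would then show by induction on $n$ that $J$ is NIP-indiscernible over $(B'(I\cup\{a_0^\ast,\ldots,a_n^\ast\});C')$, the case $n=-1$ being the hypothesis that $J$ is NIP-indiscernible over $(B'I;C')$. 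For the step, given an NIP formula $\phi(\ldots;z)$, parameters $b'\in B'$ and $c'\in C'$, a parameter tuple $e$ from $I\cup\{a_0^\ast,\ldots,a_{n+1}^\ast\}$ and two increasing tuples from the index of $J$: if $a_{n+1}^\ast$ does not occur in $e$ we finish by induction, and otherwise we pull every occurrence of $a_{n+1}^\ast$ out of the relevant instance of $\phi$ into a fresh object-side variable $x^\ast$, getting an instance of some $\phi'(\ldots,x^\ast;z)$ which is still NIP since the parameter-variable side $z$ is unchanged. All remaining parameters now lie in $BCI^\frown\langle a_n^\ast,\ldots,a_0^\ast\rangle$ (using $J\subseteq B$, $B'\subseteq B$, $C'\subseteq C$), so the defining property of $Avg_\mathcal{U}$ rewrites the truth value of this $\phi'$-instance at $a_{n+1}^\ast$ as the $\mathcal{U}$-status of the set of indices $i$ at which the $i$-th element of $I$, substituted for $x^\ast$, makes it true; for each such $i$ the parameter tuple obtained by that substitution lies in $I\cup\{a_0^\ast,\ldots,a_n^\ast\}$, so by the inductive hypothesis this index set is unchanged when we switch the two $J$-tuples, hence so is its $\mathcal{U}$-status — which is the required biconditional.

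Finally I would add the conditions ``$J$ is NIP-indiscernible over $(B'(I^\frown(\text{finite piece}));C')$'' to the type realized in the compactness step that closes the proof of \cref{prop:extendindisc}: each finite fragment of the enlarged type is met by a long enough average-type approximation, by the induction just sketched for the new conditions and by the argument already in \cref{prop:extendindisc} for the old ones, so for any index set $\lambda$ one obtains an $I^\ast$ witnessing both \cref{prop:extendindisc} and (b). The only steps that need care are the remark that regrouping $a_{n+1}^\ast$ into its own variable preserves NIP-ness and the routine consistency check for the enlarged type; I expect the real ``obstacle'' to be only the recognition that one must work inside the construction of \cref{prop:extendindisc} rather than merely cite its statement.
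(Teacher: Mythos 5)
Your argument is essentially correct, but it takes a genuinely different and much heavier route than the paper, and it ends up proving a slightly weaker form of the statement. The paper \emph{does} quote \cref{prop:extendindisc} as a black box: after your step (a), i.e.\ that $I^\frown I^\ast$ is NIP-indiscernible over $(JB';C')$, it obtains (b) by contradiction with a short reflection trick. If $J$ failed to be NIP-indiscernible over $(I^\frown I^\ast B';C')$, the failure would be witnessed by an NIP formula together with a finite increasing tuple $\overline{a}\overline{b}$ from $I^\frown I^\ast$ and finite pieces of $B'$ and $C'$; regrouping so that $\overline{a}\overline{b}$ sits among the object variables (which preserves NIP-ness, since the parameter side is untouched) and using the NIP-indiscernibility of $I^\frown I^\ast$ over $(JB';C')$, one replaces $\overline{a}\overline{b}$ by an increasing tuple of the same length lying entirely inside the infinite sequence $I$, producing a witness that $J$ is not NIP-indiscernible over $(IB';C')$ --- contradicting the hypothesis of mutual NIP-indiscernibility of $I$ and $J$. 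So your meta-claim that the statement of \cref{prop:extendindisc} ``says nothing about $J$'' and hence cannot be cited is not quite right: the statement plus the infinitude of $I$ suffices. Your alternative --- re-opening the ultrafilter construction, inducting on the finite approximations $\langle a_n^\ast,\ldots,a_0^\ast\rangle$, and enlarging the type realized in the compactness step --- does go through: the regrouping of $a_{n+1}^\ast$ into a fresh object variable preserves NIP-ness for the same reason as above, and the $Avg_\mathcal{U}$ computation applies because $B\supseteq B'J$ places all remaining parameters inside the base of each average type. Note, however, that what your argument delivers is the existence of \emph{some} continuation $I^\ast$ satisfying both \cref{prop:extendindisc} and the corollary, whereas the paper's argument shows that \emph{every} $I^\ast$ satisfying the conclusion of \cref{prop:extendindisc} works, which is how the corollary is stated. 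The existential version still suffices for the later applications (\cref{lem:minlem} and \cref{lem:fin}), but the reflection argument is both shorter and strictly stronger.
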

\begin{proof}
    By \cref{prop:extendindisc}, $I^\frown I^\ast$ is NIP-indiscernible over $(JB';C')$.
    To show that $J$ is NIP-indiscernible over $(I^\frown I^\ast B';C')$, assume, towards contradiction, that $J$ is not NIP-indiscernible over $(I^\frown I^\ast B';C')$.
    Fix a finite tuple $\overline{a}^\frown \overline{b}$ with $\overline{a}\in I$ and $\overline{b}\in I^\ast$ so that $J$ is not NIP-indiscernible over $(\overline{a}\overline{b} B';C')$, witnessed by finite subsets $\overline{B'}\subseteq B'$ and $\overline{C'}\subseteq C'$ with
    \[\vDash \phi(j_{i_0},\ldots,j_{i_n},\overline{a}, \overline{b},\overline{B'}; \overline{C'})\land \neg\phi(j_{k_0},\ldots,j_{k_n},\overline{a}, \overline{b}, \overline{B'}; \overline{C'}).\]
    Since $I^\frown I^\ast$ is NIP-indiscernible over $(JB';C')$ and $I$ is infinite, there are some $\overline{a}', \overline{b}'\in I$ such that 
    \[\vDash \phi(j_{i_0},\ldots,j_{i_n},\overline{a}', \overline{b}',\overline{B'}; \overline{C'})\land \neg\phi(j_{k_0},\ldots,j_{k_n},\overline{a}', \overline{b}',\overline{B'}; \overline{C'}).\]
    But this would imply that $J$ is not NIP-indiscernible over $(IB';C')$, a contradiction. 
\end{proof}

\begin{lemma}\label{lem:minlem}
Assume that $T$ has dependent dividing. 
Let $a$ be a tuple such that $\bdn(a/A)=1$, let $B$ be some set, and let $\mathcal{I}$ be a set of mutually $(B;A)$-NIP-indiscernible sequences. 
Then, for any $n$, given any $n+1$ mutually $(B;A)$-NIP-indiscernible sequences in $\mathcal{I}$, at least $n$ of them are mutually NIP-indiscernible over $(B;Aa).$
\end{lemma}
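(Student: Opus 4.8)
The plan is to argue by contradiction, transcribing the dp‑minimal case of the sub‑additivity of dp‑rank from \cite{KOU11}, with ``NIP‑indiscernible'' in place of ``indiscernible'' and with \cref{thm:bdn=nipdp} (available since $T$ has dependent dividing) in place of the usual characterization of dp‑rank. So I would suppose the conclusion fails: there are $n+1$ mutually $(B;A)$‑NIP‑indiscernible sequences $I_0,\dots,I_n$ in $\mathcal I$ such that no $n$ of them are mutually NIP‑indiscernible over $(B;Aa)$. Call $k\le n$ \emph{bad} if $I_k$ is not NIP‑indiscernible over $(B(I_j)_{j\ne k};Aa)$. If at most one index were bad, then deleting it (or any single index, if none is bad) would leave $n$ of the sequences each NIP‑indiscernible over a base contained in the corresponding $(B(I_j)_{j\ne k};Aa)$, hence mutually NIP‑indiscernible over $(B;Aa)$ — a contradiction. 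So at least two indices are bad, say $0$ and $1$; setting $D:=B(I_j)_{2\le j\le n}$, we obtain that $\{I_0,I_1\}$ is mutually NIP‑indiscernible over $(D;A)$ while $I_0$ is not NIP‑indiscernible over $(DI_1;Aa)$ and $I_1$ is not NIP‑indiscernible over $(DI_0;Aa)$.

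Next I would reduce to producing a depth‑$2$ instance of item (5) of \cref{thm:bdn=nipdp} for the type $p(x)=\tp(a/A)$: a set $D'$ and sequences $J_0,J_1$ that are mutually NIP‑indiscernible over $(D';A)$ with each $J_i$ not NIP‑indiscernible over $(D';Aa)$. Since $a\models\tp(a/A)$, such a configuration yields $\bdn(a/A)\ge 2$, contradicting $\bdn(a/A)=1$. To extract it from the configuration above, observe that the failure of NIP‑indiscernibility of $I_0$ over $(DI_1;Aa)$ is witnessed by an NIP formula, $a$, finitely many parameters from $D$, and a finite tuple $\bar b_1$ from $I_1$ (necessarily involving $a$, since $I_0$ is NIP‑indiscernible over $(DI_1;A)$); symmetrically for $I_1$ with a finite tuple $\bar b_0$ from $I_0$. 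Moreover, since each witnessing partitioned formula is NIP, it has finite alternation along the indiscernible sequence $I_0$ (resp. $I_1$), so the witnessing pair can be taken inside a window of $I_0$ (resp. $I_1$) straddling a single transition. I would then pass to suitable convex subsequences $J_0\subseteq I_0$, $J_1\subseteq I_1$ (disjoint from $\bar b_0$, $\bar b_1$ but still carrying the transition) and absorb $\bar b_0$, $\bar b_1$ together with the discarded parts of $I_0$, $I_1$ and the relevant finite parameters from $D$ into $D'$ — possibly appending the absorbed finite tuples as constant coordinates of $J_0,J_1$, exactly as in the passage $(5)\Rightarrow(1)$ of \cref{thm:bdn=nipdp}. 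Since a convex piece of a NIP‑indiscernible sequence stays NIP‑indiscernible over any base together with the complement of the piece, and since $\{I_0,I_1\}$ is mutually NIP‑indiscernible over $(D;A)$, the sequences $J_0,J_1$ should remain mutually NIP‑indiscernible over $(D';A)$; and the two witnessing formulas, with their parameters now inside $D'$ and $a$, should witness that $J_0$ and $J_1$ are each not NIP‑indiscernible over $(D';Aa)$.

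The hard part will be this rearrangement. Unlike plain indiscernibility, ``NIP‑indiscernible over $(B;C)$'' is sensitive to which parameters occupy the $B$‑slot versus the $C$‑slot and to which partitioned formulas are NIP, so the bookkeeping must ensure simultaneously that (i) the witnessing formulas are still NIP once $\bar b_0,\bar b_1$ (and the other absorbed parameters) are moved into the base and appended to $J_0,J_1$, and (ii) the convex windows can be chosen so that the finite tuples exchanged between the two rows fall \emph{outside} the opposite window, so that mutual NIP‑indiscernibility over $(D';A)$ is not destroyed. Making these two demands compatible — re‑choosing the bad indices, the witnesses, and the windows as needed, and using mutual NIP‑indiscernibility and indiscernibility over $A$ to relocate tuples — is precisely where care is required to push the argument of \cite{KOU11} through with ``NIP‑indiscernible'' substituted for ``indiscernible''; once it is in place, the reduction to item (5) of \cref{thm:bdn=nipdp} is a direct transcription of the dp‑minimal case there.
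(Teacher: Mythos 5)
Your global strategy (at least two of the $n+1$ rows must be ``bad'', and two bad rows should yield a depth-$2$ instance of item (5) of \cref{thm:bdn=nipdp}, contradicting $\bdn(a/A)=1$) is sound in outline and genuinely different from the paper's argument, which proceeds by induction on $n$ with a case split. But the step you yourself flag as ``the hard part'' is a real gap, and the mechanism you propose for it does not work. The obstruction: the witness that $I_0$ fails NIP-indiscernibility over $(DI_1;Aa)$ uses a finite tuple $\bar b_1\subseteq I_1$, and symmetrically $\bar b_0\subseteq I_0$; to reach item (5) you must move $\bar b_0,\bar b_1$ into the common base $D'$ while keeping $J_1\subseteq I_1$ NIP-indiscernible over a base containing $\bar b_1$ (and $J_0$ over one containing $\bar b_0$) \emph{and} keeping a witnessing pair for each failure inside the corresponding window. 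A convex window of $I_0$ avoiding $\bar b_0$ need not contain any transition of the witnessing instance: if the truth value of $\varphi_0(\,\cdot\,,\bar d_0\bar b_1;\bar c_0 a)$ along $I_0$ switches exactly at the position occupied by $\bar b_0$, then every convex window meeting both sides of the switch also contains $\bar b_0$. And since the witnessing formulas genuinely involve $a$, you cannot relocate $\bar b_0$ by appealing to NIP-indiscernibility of $I_0$ over $(DI_1;A)$, so ``re-choosing the witnesses and the windows as needed'' has no evident implementation.

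The paper resolves exactly this difficulty with a different tool, which your sketch does not use: the average-type continuations of \cref{prop:extendindisc} and \cref{cor:extendindisc}. One extends each sequence by realizations of average types over everything (including $a$) and replaces the offending tuple $\bar b$ by a copy $\bar b'$ in the continuation realizing $\niptp(\bar b/\cdots;Aa)$; this preserves the non-indiscernibility witnesses (they are instances of NIP formulas over the relevant base) while the original sequences remain NIP-indiscernible over bases containing $\bar b'$, because $\bar b'$ sits beyond them. If you graft that relocation step onto your two-bad-rows argument in place of the convex-window step, your contradiction should go through; as written, the proposal is incomplete at its decisive point. (The paper's own proof is also organized differently: an induction on $n$ in which the inductive hypothesis is applied several times and the continuation trick reduces the general case to the case where $I_{n+1}$ already fails NIP-indiscernibility over $(B;Aa)$.)
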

\begin{proof}
    We prove the statement by induction on $n$. For $n=1$, we need to show that given any two mutually $(B;A)$-NIP-indiscernible sequences in $\mathcal{I}$, at least one of them is NIP-indiscernible over $(B;Aa)$. This is true by (1)$\leftrightarrow$(5) in \cref{thm:bdn=nipdp}.

    For the inductive hypothesis, assume that given any $n$ mutually $(B;A)$-NIP-indiscernible sequences in $I$, at least $n-1$ of them are mutually NIP-indiscernible over $(B;Aa).$
    Now, let $I_1,\ldots, I_{n+1}$ be $n+1$ mutually $(B;A)$-NIP-indiscernible sequences.
    By the definition of mutual indiscernibility, we know that 
    $I_1,\ldots, I_{n}$ are mutually indiscernible over $(BI_{n+1};A)$. 
    Therefore, by the inductive hypothesis, at least $n-1$ of them are mutually NIP-indiscernible over $(BI_{n+1}; Aa)$.
    Without loss of generality, assume that $I_1,\ldots, I_{n-1}$ are mutually NIP-indiscernible over $(BI_{n+1}; Aa)$.
    
    If $I_{n+1}$ was NIP-indiscernible over $(BI_1\ldots I_{n-1}; Aa)$, then $I_1,\ldots, I_{n-1}, I_{n+1}$ would be mutually NIP-indiscernible over $(B;Aa)$. If this is the case, then we would be done, so assume that this is not the case.

    Since non-NIP-indiscernibility can be witnessed by a finite sequence, we assume for the rest of the proof that $I_{n+1}$ is not NIP-indiscernible over $(B\overline{b}; Aa)$ for some $\overline{b}\subseteq \cup\{I_1,\ldots, I_{n-1}\}$.
    Consider the following cases:
    \begin{enumerate}
        \item $I_{n+1}$ is not NIP-indiscernible over $(B; Aa)$.
        
        Since $I_2,I_3,\ldots,I_n,I_{n+1}$ are mutually NIP-indiscernible over $(BI_1; A)$, by the induction hypothesis, at least $n-1$ of them are mutually-NIP indiscernible over $(BI_1; Aa)$. Since $I_{n+1}$ is not NIP-indiscernible over $(B;Aa)$, we know $I_2,I_3,\ldots,I_n$ are mutually NIP-indiscernible over $(BI_1;Aa)$. 
    With the same argument, we can show that $I_1,I_3,\ldots,I_n$ are mutually NIP-indiscernible over $(BI_2; Aa)$. 
    Therefore, $I_1,I_2,\ldots, I_{n}$ are mutually NIP-indiscernible over $(B;Aa)$.
        \item $I_{n+1}$ is not NIP-indiscernible over $(B\overline{b};Aa)$ for some $\overline{b}\subseteq \cup\{I_1,\ldots, I_{n-1}\}$.

        We argue that we can reduce this case to case (1). For each $k$ with $1\leq k<n$, we inductively define a continuation $I^\ast_k$ of $I_k$. 
        After picking $I_j^\ast$ for $j<k$, define $I_k^\ast$ be a sequence indexed by $\omega$ as in \cref{prop:extendindisc} so that whenever 
        $B'\subseteq B\cup \cup_{i=1}^n I_i\cup \cup_{j=1}^{k-1} I_j^\ast$ and $A'\subseteq Aa$ are such that $I_k$ is NIP-indiscernible over $(B';A')$, the sequence $I_k^\frown I_k^\ast$ is NIP-indiscernible over $(B';A')$. 
        By \cref{cor:extendindisc}, we have 
        \begin{itemize}
            \item $I_1^\frown I_1^\ast, \ldots, I_{n-1}^\frown I_{n-1}^\ast, I_n, I_{n+1}$ are mutually $(B;A)$-NIP-indiscernible,
            \item $I_1^\frown I_1^\ast, \ldots, I_{n-1}^\frown I_{n-1}^\ast$ are mutually 
            NIP-indiscernible over $(BI_{n+1};Aa)$, and
            \item $I_{n+1}$ is not NIP-indiscernible over $(B\overline{b};Aa)$ for some $\overline{b}\subseteq \cup\{I_1,\ldots, I_{n-1}\}$.
        \end{itemize}
        Since $I_1^\frown I_1^\ast, \ldots, I_{n-1}^\frown I_{n-1}^\ast$ are mutually 
            NIP-indiscernible over $(BI_{n+1};Aa)$, we can find 
            $\overline{b}'\subseteq \cup_{i=1}^{n-1} I^\ast_i$
            with $\overline{b}'\vDash \niptp(\overline{b}/BI_{n+1};Aa)$ such that
        \begin{itemize}
            \item $I_1, \ldots, I_{n-1}, I_n, I_{n+1}$ are mutually $(B\overline{b}';A)$-NIP-indiscernible,
            \item $I_1, \ldots, I_{n-1}$ are mutually NIP-indiscernible over $(B\overline{b}'I_{n+1};Aa)$, and
            \item $I_{n+1}$ is not NIP-indiscernible over $(B\overline{b}';Aa)$.
        \end{itemize}
        In this way, we can apply case (1) with $B=B\overline{b}'$.
    \end{enumerate}
\end{proof}

\begin{corollary}\label{cor:minsubadditive}
Assume that $T$ has dependent dividing.
Let $\tp(a_i/A)$ be such that $\bdn(a_i/A)=1$ for $1\leq i\leq k$.
Then, $\bdn(a_1\ldots a_k/A)\leq k$.
\end{corollary}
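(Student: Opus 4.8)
The plan is to prove the statement by induction on $k$, using Lemma \ref{lem:minlem} as the engine. The base case $k=1$ is trivial. For the inductive step, suppose the result holds for $k-1$, so $\bdn(a_1\ldots a_{k-1}/A)\le k-1$; I want to conclude $\bdn(a_1\ldots a_k/A)\le k$. Suppose for contradiction that $\bdn(a_1\ldots a_k/A)\ge k+1$. By (1)$\to$(4) of Theorem \ref{thm:bdn=nipdp} there is a realization $d=a_1'\ldots a_k'$ of $\tp(a_1\ldots a_k/A)$ and mutually NIP-indiscernible sequences $\{I_\alpha:\alpha\le k\}$ over $A$ (that is, $k+1$ of them) such that each $I_\alpha$ fails to be NIP-indiscernible over $Ad$. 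The strategy is to apply Lemma \ref{lem:minlem} once for each coordinate $a_i$ and count how many rows can survive.

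The key steps, in order: first, write $\mathcal I = \{I_0,\ldots,I_k\}$; these are $k+1$ mutually NIP-indiscernible sequences over $(\emptyset;A)$, hence in particular over $(\emptyset;A a_1'\ldots a_{i-1}')$ for the trivial reason that they start mutually NIP-indiscernible over $A$ — wait, that is not automatic, so instead I proceed sequentially. Apply Lemma \ref{lem:minlem} with the tuple $a_1'$ (which has $\bdn(a_1'/A)=1$, as $\tp(a_1'/A)=\tp(a_1/A)$): among the $k+1$ sequences in $\mathcal I$, at least $k$ of them are mutually NIP-indiscernible over $(\emptyset;Aa_1')$. Discard one, keeping a sub-collection $\mathcal I_1$ of size $k$ that is mutually NIP-indiscernible over $(\emptyset;Aa_1')$. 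Now apply Lemma \ref{lem:minlem} again, this time with base set $B=\emptyset$, index set $A$ replaced by $Aa_1'$, and tuple $a_2'$ — here I use that $\bdn(a_2'/Aa_1')\le \bdn(a_1'a_2'/A)\le\cdots$; actually I need $\bdn(a_2'/Aa_1')\le 1$. This does not follow from the hypotheses directly, so the correct move is to apply the lemma with $a_2'$ and base index set $A$ but carrying $a_1'$ as a parameter in the $B$-slot: Lemma \ref{lem:minlem} with $B=\{a_1'\}$ says that among the $k$ sequences of $\mathcal I_1$ — which are mutually NIP-indiscernible over $(a_1';A)$ since they are mutually NIP-indiscernible over $(\emptyset;Aa_1')$, and NIP-indiscernibility in the $(B;C)$ sense is symmetric enough in how $B$ and $C$ enter — at least $k-1$ are mutually NIP-indiscernible over $(a_1';Aa_2')$, equivalently over $(\emptyset; Aa_1'a_2')$ after absorbing. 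Iterating: after processing $a_1',\ldots,a_k'$ we have discarded at most $k$ sequences, leaving at least $(k+1)-k = 1$ sequence $I_\alpha$ that is NIP-indiscernible over $(\emptyset;Aa_1'\ldots a_k') = (\emptyset;Ad)$. This contradicts the choice of the $I_\alpha$, so $\bdn(a_1\ldots a_k/A)\le k$.

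The main obstacle I anticipate is the bookkeeping around the $(B;C)$ versus $(\emptyset;BC)$ distinction when iterating Lemma \ref{lem:minlem}: the lemma is stated with a fixed auxiliary set $B$ and allows enlarging the index set $A$ to $Aa$, so to pick up the successive coordinates $a_1',\ldots,a_k'$ I must either show that mutual NIP-indiscernibility over $(\emptyset;A a_1'\cdots a_{i-1}')$ implies mutual NIP-indiscernibility over $(\{a_1',\ldots,a_{i-1}'\};A)$ in the precise sense Lemma \ref{lem:minlem} requires, or re-derive a version of the lemma with the coordinates moved into the $C$-slot. The former should hold because an NIP formula with some of its parameter variables instantiated by elements of a fixed tuple remains NIP, so the two notions of indiscernibility coincide; spelling this out carefully is the one routine-but-necessary verification. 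Everything else is a clean induction plus the pigeonhole count ``$(k+1) - \underbrace{1-1-\cdots-1}_{k} = 1$'' built into $k$ applications of Lemma \ref{lem:minlem}.
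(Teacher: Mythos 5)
Your overall shape (induction on $k$, \cref{lem:minlem} as the engine, and a pigeonhole count) is reasonable, but the step you chose to make the iteration go through is the one that fails. You reject the direct route---applying \cref{lem:minlem} a second time with the index set $A$ replaced by $Aa_1'$---on the grounds that $\bdn(a_2'/Aa_1')\le 1$ ``does not follow from the hypotheses directly.'' In fact it does: burden is monotone in the base set (an inp-pattern in $\tp(a_2'/Aa_1')$ is a fortiori an inp-pattern in $\tp(a_2'/A)$, since path-consistency with the larger type implies path-consistency with the smaller), so $\bdn(a_2'/Aa_1')\le\bdn(a_2'/A)=\bdn(a_2/A)=1$. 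The substitute you adopt instead---moving $a_1'$ into the $B$-slot and claiming that mutual NIP-indiscernibility over $(\emptyset;Aa_1')$ yields mutual NIP-indiscernibility over $(a_1';A)$---is not a routine verification: it is false in general. In the definition of NIP-indiscernibility over $(B;C)$, the formula $\phi(x_0,\ldots,x_n,y;z)$ must be NIP \emph{for the partition that puts $y$ on the object side}, and NIP of a partitioned formula is not preserved when a variable is moved across the semicolon. Concretely, in the random graph the formula $\phi(x_0,y;z):=R(x_0,y)$ is NIP for the partition $(x_0y;z)$ (it ignores $z$; indeed the paper notes that any formula with empty parameter part is NIP), while $R(x_0;y)$ has IP; so NIP-indiscernibility over $(a_1';A)$ demands ordinary indiscernibility over $a_1'$ with respect to arbitrary formulas, which NIP-indiscernibility over $(\emptyset;Aa_1')$ does not provide. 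Your pivot therefore trades a true statement you did not need to avoid for a false one, and this is a genuine gap.

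For comparison, the paper's proof sidesteps the coordinate-by-coordinate iteration entirely: it applies \cref{lem:minlem} exactly once, to $a_k$ (whose burden over $A$ is exactly $1$ by hypothesis), obtaining $k$ of the $k+1$ sequences mutually NIP-indiscernible over $Aa_k$; it then treats $a_1\cdots a_{k-1}$ as a single block, using the inductive hypothesis together with base monotonicity to get $\bdn(a_1\cdots a_{k-1}/Aa_k)\le k-1$, and invokes \cref{thm:bdn=nipdp} over the base $Aa_k$ to conclude that one of the $k$ surviving sequences is NIP-indiscernible over $Aa_1\cdots a_k$. If you prefer to keep your iteration, the fix is the monotonicity observation above, keeping every accumulated coordinate in the second slot throughout; you would also need a one-line remark for the degenerate case $\bdn(a_i'/Aa_1'\cdots a_{i-1}')=0$, since \cref{lem:minlem} is stated for burden exactly $1$.
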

\begin{proof}
We prove the statement by induction on $k$. When $k=1$, the statement is trivial. 

For the inductive hypothesis, assume that the corollary holds for tuples of fewer than $k$ elements. To show that $\bdn(a_1\ldots a_k/A)\leq k$, let $I_1,\ldots, I_{k+1}$ be mutually NIP-indiscernible sequences over $A$. 
By \cref{lem:minlem}, at least $k$ of them are  mutually NIP-indiscernible over $Aa_{k}$.
Without loss of generality, assume that $I_1,\ldots,I_{k}$ are mutually NIP-indiscernible over $Aa_{k}$.
Notice that by the inductive hypothesis and the definition of burden, we know that \[\bdn(a_1\ldots a_{k-1}/Aa_{k})\leq \bdn(a_1\ldots a_{k-1}/A)\leq k-1.\] 
By \cref{thm:bdn=nipdp}, at least one of $I_1,\ldots,I_{k}$ is NIP-indiscernible over $Aa_{1}\ldots a_k$.
\end{proof}

\subsection{Sub-additivity of NIP dp-rank: NIP dp-finite}
The following proposition is the analogue of \cref{lem:minlem} in the finite NIP-dp-rank setting.
\begin{proposition}\label{prop:finprop}
    Assume that $T$ has dependent dividing.
    Let $a$ be an element such that $\bdn(a/A) \leq k$, $B$ be some set, and let $\mathcal{I}:=\{I_1,\ldots, I_m\}$ be a set of mutually $(B;A)$-indiscernible sequences with $m>k$.
    Then, there is a subset of $\mathcal{I}$ of size $m-k$ that is mutually NIP-indiscernible over $(B;Aa)$.
\end{proposition}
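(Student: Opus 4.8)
The plan is to run the proof of \cref{lem:minlem} with the hypothesis $\bdn(a/A)=1$ replaced by $\bdn(a/A)\le k$, the role of the dp-minimal input now being played by the following consequence of \cref{thm:bdn=nipdp}: applying that theorem to $p(x)=\tp(a/A)$ with $\kappa=k+1$, with $a$ itself playing the role of $d$ and with an arbitrary set $D\supseteq B$ playing the role of the set $D$ in clause (5), the assumption $\bdn(a/A)\le k$ is equivalent to the non-existence of an NIP-inp-pattern of depth $k+1$ in $\tp(a/A)$, hence to the statement that whenever $J_0,\dots,J_k$ are mutually NIP-indiscernible over $(D;A)$ for some $D$, at least one $J_t$ is NIP-indiscernible over $(D;Aa)$; conversely, exhibiting $k+1$ mutually NIP-indiscernible sequences over some $(D;A)$ none of which is NIP-indiscernible over $(D;Aa)$ forces $\bdn(a/A)\ge k+1$. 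This is exactly the $k$-ary analogue of the base step of \cref{lem:minlem}.

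I will then argue by induction on $m$. For $m=k+1$ the claim is the displayed fact with $D=B$. For $m\ge k+2$: the sequences $I_1,\dots,I_{m-1}$ are mutually NIP-indiscernible over $(BI_m;A)$ and $m-1>k$, so the inductive hypothesis gives, after relabelling, that $I_1,\dots,I_{m-1-k}$ are mutually NIP-indiscernible over $(BI_m;Aa)$, hence over $(B;Aa)$. If $I_m$ is moreover NIP-indiscernible over $(BI_1\cdots I_{m-1-k};Aa)$, then $\{I_1,\dots,I_{m-1-k},I_m\}$ is a mutually NIP-indiscernible-over-$(B;Aa)$ family of size $m-k$ and we are done; otherwise $I_m$ fails to be NIP-indiscernible over $(B\bar b;Aa)$ for some finite $\bar b\subseteq I_1\cup\cdots\cup I_{m-1-k}$, and I split into the two cases of \cref{lem:minlem}: (i) $I_m$ is already not NIP-indiscernible over $(B;Aa)$; (ii) it is, in which case I first reduce to (i).

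Case (ii) will be handled as case (2) of \cref{lem:minlem}: extend $I_1,\dots,I_{m-1-k}$ to $I_j^\frown I_j^\ast$ by \cref{prop:extendindisc} with sufficiently large parameter sets, invoke \cref{cor:extendindisc} to keep the enlarged array mutually $(B;A)$-NIP-indiscernible and $I_1,\dots,I_{m-1-k}$ mutually NIP-indiscernible over $(BI_m;Aa)$, realize $\niptp(\bar b/BI_m;Aa)$ by some $\bar b'\subseteq\bigcup_j I_j^\ast$, and absorb $\bar b'$ into the base, so that $I_m$ is not NIP-indiscernible over $(B\bar b';Aa)$ --- which is case (i) with $B$ replaced by $B\bar b'$. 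In case (i), $I_m$ lies in no mutually NIP-indiscernible-over-$(B;Aa)$ family, so I must build an $(m-k)$-element such family out of $I_1,\dots,I_{m-1}$ alone. I will fix a maximal $S\subseteq\{1,\dots,m-1\}$ with $\{I_i:i\in S\}$ mutually NIP-indiscernible over $(B;Aa)$; if $|S|\ge m-k$ we are done, and if $|S|\le m-k-1$ then at least $k$ indices lie outside $S$, and each fails to extend $S$, either because $I_i$ is not NIP-indiscernible over $(B\cup\bigcup_{j\in S}I_j;Aa)$ (a type-(a) failure) or because some $I_l$ with $l\in S$ loses NIP-indiscernibility once $I_i$ is added to its parameter set (type-(b)). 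If every failure is of type (a), then those at least $k$ sequences together with $I_m$ form at least $k+1$ mutually NIP-indiscernible sequences over $(B\cup\bigcup_{j\in S}I_j;A)$, none NIP-indiscernible over $(B\cup\bigcup_{j\in S}I_j;Aa)$, contradicting $\bdn(a/A)\le k$; a type-(b) failure is cleared by relocating the finitely many offending witness tuples into the base via \cref{prop:extendindisc} and \cref{cor:extendindisc} exactly as in case (ii), after which a suitably re-chosen $S$ has only type-(a) failures and the same contradiction applies.

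The main obstacle is the combinatorial control in case (i). When $k=1$, \cref{lem:minlem} needs only two invocations of the inductive hypothesis, for two pivot sequences whose witness families of size $m-2$ jointly cover every role; for general $k$ the witness families have size only $m-1-k$ and one has much less control over which subfamily the inductive hypothesis returns, which is what forces the maximal-subfamily and contradiction formulation above. The delicate bookkeeping, running through the iterated applications of \cref{prop:extendindisc} and \cref{cor:extendindisc}, will be: (a) maintaining mutual NIP-indiscernibility of the enlarged array over $(B';A)$ as the base $B'$ grows; (b) ensuring that each relocated witness tuple, taken from the Morley-type continuation of its host sequence, can be absorbed into the base without destroying the indiscernibility of any sequence; and (c) checking that enlarging the base can only turn an addable index into a failing one, never the reverse, so that the process of clearing type-(b) failures terminates. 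Each individual step is standard once the earlier lemmas are in place; the work is in assembling them.
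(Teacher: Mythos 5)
Your overall architecture (induction on $m$, the reduction of case (ii) to case (i) via \cref{prop:extendindisc} and \cref{cor:extendindisc}, and the type-(a) contradiction against $\bdn(a/A)\le k$ via clause (5) of \cref{thm:bdn=nipdp}) is reasonable, and the base case and the case (ii) reduction are fine. The gap is precisely in the step you defer to ``delicate bookkeeping'': clearing a type-(b) failure. If $I_i$ fails to extend the maximal family $S$ because some $I_l$ with $l\in S$ ceases to be NIP-indiscernible over $(Aa)$ once $I_i$ enters its parameter set, the finite witness tuple $\bar b$ necessarily contains elements of $I_i$. The only tool available for relocating a witness into the base is the average-type machinery of \cref{prop:extendindisc} and \cref{cor:extendindisc} (packaged as \cref{lem:fin}), and it requires the tuple being moved to lie in a family $\mathcal J$ that is already mutually NIP-indiscernible over $(BI_l;Aa)$; otherwise the shifted tuple $\bar b'$ need not realize $\niptp(\bar b/BI_l;Aa)$, and so need not witness the failure of $I_l$ over the enlarged base. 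In a type-(b) situation you know $I_i$ is NIP-indiscernible over $(B\bigcup_{j\in S}I_j;Aa)$, but you do not know that each $I_j$ with $j\in S\setminus\{l\}$ remains NIP-indiscernible once $I_i$ is added to its parameters (any such failure is itself another type-(b) failure), so the hypothesis of \cref{lem:fin} is not met and the relocation ``exactly as in case (ii)'' is not licensed. Moreover, each successful clearing ejects $I_l$ from $S$ over the enlarged base, so $S$ must be re-chosen and the new maximal family may generate fresh type-(b) failures; you assert but do not establish that this process terminates in a configuration with only type-(a) failures.

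The paper sidesteps this entirely by restructuring the induction: it inducts on the size $n$ of the good subfamily (the property $S_{k,n}$ of \cref{def:fin}) rather than on $m$, and for each candidate pivot $I_i$ it obtains the size-$n$ good subfamily $\mathcal I_i$ from the inductive hypothesis \emph{relative to the base $BI_i$}. This guarantees that the witness of $I_i$'s failure lives inside a family already known to be mutually NIP-indiscernible over $(BI_i;Aa)$, which is exactly the hypothesis \cref{lem:fin} needs; if every pivot fails, all $k+n+1$ failures are absorbed into a common base and contradict $S_{k,1}$ directly, with no case distinction between types of failure. Your maximal-$S$ formulation discards this control, and I do not see how to restore it without essentially reorganizing the argument along the paper's lines.
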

The main theorem, \cref{thm:subadditivebdn}, follows from \cref{prop:finprop}. We show \cref{prop:finprop} by breaking it up into multiple smaller lemmas. 
\begin{definition}\label{def:fin}
    Let $\mathcal{I}:=\{I_1,\ldots, I_m\}$ be a set of mutually NIP-indiscernible sequences over $A$, and let $a$ be a tuple. 
    We say that the pair $\mathcal{I}, a$ satisfies $S_{k,n}$ if the following conditions hold:
    \begin{itemize}
        \item $\mathcal{I}\geq k+n$,
        \item For any set $B$ such that $\mathcal{I}:=\{I_1,\ldots, I_m\}$ is a set of mutually $(B;A)$-NIP-indiscernible sequences, given any $n+k$ sequences in $\mathcal{I}$ at least $n$ of them are mutually NIP indiscernible over $(B;Aa)$.
    \end{itemize}
\end{definition}
In particular, in a theory with dependent dividing, a type $p(x)$ over $A$ has $\bdn(p(x))\leq k$ iff for any realization $a$ of $p(x)$ and every set $\mathcal{I}$ of mutually NIP-indiscernible sequences over $A$ where $|\mathcal{I}|\geq k+1$, we have that $\mathcal{I}, a$ satisfies $S_{k,1}$.

\begin{lemma}\label{lem:fin}
    Let $a$ be an element, and let $\mathcal{I}$ be a set of mutually NIP-indiscernibile sequences over $(B;A)$. 
    Let $\mathcal{J}\subseteq \mathcal{I}$. Let $I\in \mathcal{I}$ be such that $\mathcal{J}$ is mutually NIP indiscernible over $(BI;Aa)$ and such that $I$ is not NIP-indiscernible over $(B\mathcal{J};Aa)$.
    Then, there is $B'\supseteq B$ such that the following holds:
    \begin{itemize}
        \item $\mathcal{I}$ is mutually NIP-indiscernible over $(B';A).$
        \item $I$ is not NIP-indiscernible over $(B';Aa)$.
        \item $\mathcal{J}$ is mutually NIP-indiscernible over $(B'I;Aa)$.
    \end{itemize}
\end{lemma}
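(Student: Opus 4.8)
The plan is to build $B'$ by adding to $B$ a suitable finite tuple drawn from the "tail extensions" of the sequences in $\mathcal{J}$, using \cref{prop:extendindisc} and \cref{cor:extendindisc} exactly as in case (2) of \cref{lem:minlem}. The starting point is the observation that, since $I$ is not NIP-indiscernible over $(B\mathcal{J};Aa)$ and non-NIP-indiscernibility is witnessed by a finite subformula and finitely many parameters, we may fix a finite tuple $\overline{b} \subseteq \bigcup \mathcal{J}$ such that $I$ is already not NIP-indiscernible over $(B\overline{b};Aa)$. The goal is then to "absorb" $\overline{b}$ into the base by replacing it with a $\niptp(\cdot/BI;Aa)$-equivalent tuple $\overline{b}'$ that sits in fresh coordinates, so that mutual NIP-indiscernibility of all of $\mathcal{I}$ over $(B;A)$ is not disturbed.

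First I would enumerate $\mathcal{J} = \{J_1, \ldots, J_r\}$ and inductively construct continuations $J_1^\ast, \ldots, J_r^\ast$: having chosen $J_\ell^\ast$ for $\ell < t$, apply \cref{prop:extendindisc} to $J_t$ with the "base pair" consisting of $B \cup \bigcup \mathcal{I} \cup \bigcup_{\ell<t} J_\ell^\ast$ on the $B$-side and $Aa$ on the $C$-side, obtaining a continuation $J_t^\ast$ so that $J_t^\frown J_t^\ast$ is NIP-indiscernible over every sub-pair over which $J_t$ already was. By repeated application of \cref{cor:extendindisc} (moving one sequence at a time past the others), the following hold: (i) $J_1^\frown J_1^\ast,\ldots,J_r^\frown J_r^\ast$ together with the sequences of $\mathcal{I}\setminus\mathcal{J}$ are mutually NIP-indiscernible over $(B;A)$; (ii) $J_1^\frown J_1^\ast,\ldots,J_r^\frown J_r^\ast$ are mutually NIP-indiscernible over $(BI;Aa)$ — this uses the hypothesis that $\mathcal{J}$ was mutually NIP-indiscernible over $(BI;Aa)$ and that each $J_t^\ast$ was chosen relative to a base containing $Aa$ and $I$; (iii) $I$ is still not NIP-indiscernible over $(B\overline{b};Aa)$, witnessed by the same data as before, since we have only extended the $J$'s and not touched $I$ or $B$.

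Next, using mutual NIP-indiscernibility from (ii) over $(BI;Aa)$, I can find $\overline{b}' \subseteq \bigcup_t J_t^\ast$ with $\overline{b}' \models \niptp(\overline{b}/BI\,;Aa)$. Then I set $B' := B\overline{b}'$. Checking the three bullet points: $\mathcal{I}$ is mutually NIP-indiscernible over $(B';A)$ because $\overline{b}' \subseteq \bigcup_t J_t^\ast$ and (i) gives mutual NIP-indiscernibility over the larger base $B \cup \bigcup \mathcal{I} \cup \bigcup_t J_t^\ast$; $I$ is not NIP-indiscernible over $(B';Aa)$ because $\overline{b}'$ realizes the same NIP-type over $BI \supseteq B$ relative to the $Aa$-parameter-side as $\overline{b}$ did, so the non-NIP-indiscernibility witness for $\overline{b}$ transfers to $\overline{b}'$ (here one must be slightly careful that the witnessing NIP formula for $I$ not being NIP-indiscernible, with $\overline{b}$ and the $Aa$-parameters plugged in, is recorded as an NIP formula in the variables for $\overline{b}$ with parameters from $Aa$ — exactly the setup of $\niptp(\overline{b}/BI;Aa)$); and $\mathcal{J}$ is mutually NIP-indiscernible over $(B'I;Aa) = (B\overline{b}'I;Aa)$ because $\overline{b}'$ is in the continuations $J_t^\ast$ and, by construction in the previous paragraph together with (ii), each $J_t = $ the original head is NIP-indiscernible over $(BI \cup (\text{other } J_s) \cup \bigcup_\ell J_\ell^\ast;Aa)$, and in particular over $(B\overline{b}'I \cup (\text{other }J_s);Aa)$.

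The main obstacle — as in \cref{lem:minlem} — is bookkeeping the parameter/formula sides carefully: NIP-indiscernibility is a two-sorted notion $(B;C)$, and one must verify at each step that the formulas being quantified over stay NIP in the relevant partition, and that "realizing the same $\niptp$ over $(BI;Aa)$" really does transport both the positive facts (mutual NIP-indiscernibility of $\mathcal{J}$) and the negative fact (non-NIP-indiscernibility of $I$). The transport of the negative fact is the subtle point, since it requires the witnessing formula for $I$'s non-NIP-indiscernibility, after substituting the $Aa$-side parameters, to be an NIP formula in the $\overline{b}$-variables over $Aa$; this holds because that witnessing formula is NIP by the definition of "$I$ not NIP-indiscernible over $(B\overline{b};Aa)$", and NIP-ness is preserved under the relevant re-partitioning of variables just as in the $(5)\to(1)$ argument of \cref{thm:bdn=nipdp}. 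Everything else is a routine iteration of \cref{prop:extendindisc} and \cref{cor:extendindisc}.
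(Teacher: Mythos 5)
Your proposal is correct and follows essentially the same route as the paper's proof: extract a finite witness $\overline{b}\subseteq\bigcup\mathcal{J}$ of the non-NIP-indiscernibility of $I$, build tail continuations $J_t^\ast$ inductively via \cref{prop:extendindisc} and \cref{cor:extendindisc}, replace $\overline{b}$ by a tuple $\overline{b}'$ in the tails realizing $\niptp(\overline{b}/BI;Aa)$, and set $B'=B\overline{b}'$. Your explicit attention to why the negative fact (non-NIP-indiscernibility of $I$) transports along NIP-type equivalence is a point the paper leaves implicit, but the argument is the same.
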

\begin{proof}
  We show this statement for finite $\mathcal{J}$, which is what we need to prove \cref{thm:subadditivebdn}. The proof for infinite $\mathcal{J}$ goes similarly to the finite case, using transfinite induction. 

  Let $\mathcal{J}=\{J_1,\ldots, J_n\}$ and using \cref{prop:extendindisc}, define a continuation $J_t^\ast := \langle a_i^\ast \rangle_{i\in \omega} $ for every sequence $J_t\in \mathcal{J}$ inductively (on $t$) having that $J_t^\frown J_t^\ast$ is NIP-indiscernible over any $(B'';A')$ with
  \[B''\subseteq B \cup \mathcal{I}\cup \cup_{j=1}^{t-1} J_j^\ast, A'\subseteq Aa\]
  over which $J_t$ was already NIP-indiscernible.

  Because $\mathcal{J}$ is mutually NIP-indiscernible over $(BI;Aa)$, it follows from \cref{cor:extendindisc} that 
  \begin{itemize}
      \item $\{J_1^\frown J_1^\ast,\ldots, J_n^\frown J_n^\ast\}\cup (\mathcal{I}\backslash \mathcal{J})$ is mutually NIP-indiscernible over $(B;A)$,
      \item $\{J_1^\frown J_1^\ast,\ldots, J_n^\frown J_n^\ast\}$ is mutually NIP-indiscernible over $(BI;Aa)$, and
      \item $I$ is not NIP-indiscernible over $(B\overline{b}; Aa)$ for some $\overline{b}\in \cup \{J_1,\ldots, J_n\}.$
  \end{itemize}
  Since $\{J_1^\frown J_1^\ast,\ldots, J_n^\frown J_n^\ast\}$ is NIP-indiscernible over $(BI;Aa)$, we can fix $\overline{b}'\in \cup \{J_1^\ast,\ldots, J_n^\ast\} $ such that $ \overline{b}'\vDash \niptp(\overline{b}/BI;Aa)$.

  Now, we have 
  \begin{itemize}
      \item $\mathcal{I}$ is mutually indiscernible over $(B\overline{b}';A)$,
      \item $\{J_1,\ldots, J_n\}$ is mutually NIP-indiscernible over $(IB\overline{b}';Aa)$, and
      \item $I$ is not NIP-indiscernible over $(B\overline{b}';Aa)$.
  \end{itemize}
  By letting $B':= B\overline{b}'$, we have proved the claim. 
\end{proof}
Here is a generalization of \cref{prop:finprop} with the new notation. 
\begin{proposition}
    Let $a$ be an element, $n$ be any natural number, and let $\mathcal{I}:=\{I_1,\ldots, I_m\}$ be mutually $(B;A)$-NIP-indiscernible sequences with $m\geq k+n$ such that $\mathcal{I},a$ satisfies $S_{k,1}.$
    Then, $\mathcal{I}, a$ satisfies $S_{k,n}.$
\end{proposition}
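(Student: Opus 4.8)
The plan is to prove the proposition by induction on $n$, nested inside an outer induction on $k$, following the template of \cref{lem:minlem} (the case $k=1$), with \cref{lem:fin} playing the role of the explicit continuation argument used there. The base cases are immediate: $n=1$ is the hypothesis itself, and when $k=0$ the property $S_{0,1}$ forces every relevant single sequence to be NIP-indiscernible over $(B;Aa)$, which at once gives $S_{0,n}$. Throughout we use $S_{k,1}$ --- and, once the inner induction is running, $S_{k,n}$ --- for subfamilies of $\mathcal I$ and over enlargements of the base; this is legitimate because $\mathcal I,a$ satisfying $S_{k,1}$ amounts to $\bdn(a/A)\le k$ (cf.\ the remark after \cref{def:fin}), so $S_{k,1}$, and inductively $S_{k,n}$, holds for every set of mutually NIP-indiscernible sequences over $A$ of size $\ge k+1$, at every base over which that set is mutually NIP-indiscernible.

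For the inductive step, assume the proposition for $(k,n)$ (and smaller $n$) and for all $(k-1,\cdot)$, and prove $S_{k,n+1}$. Let $B$ be a base over which $\mathcal I$ is mutually NIP-indiscernible and fix $k+n+1$ sequences $I_1,\ldots,I_{k+n+1}$ in $\mathcal I$. Applying $S_{k,n}$ to $\{I_1,\ldots,I_{k+n}\}$ over the base $BI_{k+n+1}$ and reindexing, we get that $I_1,\ldots,I_n$ are mutually NIP-indiscernible over $(BI_{k+n+1};Aa)$. If $I_{k+n+1}$ is still NIP-indiscernible over $(BI_1\cdots I_n;Aa)$, then these two facts immediately give that $\{I_1,\ldots,I_n,I_{k+n+1}\}$ is mutually NIP-indiscernible over $(B;Aa)$, and we are done. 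Otherwise $I_{k+n+1}$ is not NIP-indiscernible over $(B\mathcal J;Aa)$ for $\mathcal J=\{I_1,\ldots,I_n\}$, and we apply \cref{lem:fin} --- with this $\mathcal J$, with $I=I_{k+n+1}$, and with the full $\mathcal I$ as the ambient family --- to obtain $B'\supseteq B$ over which $\mathcal I$ is still mutually NIP-indiscernible, over which $I_1,\ldots,I_n$ are still mutually NIP-indiscernible relative to $I_{k+n+1}$, and over which $I_{k+n+1}$ is already not NIP-indiscernible over $(B';Aa)$ alone. Renaming $B'$ to $B$, we are reduced to the case that $I_{k+n+1}$ is not NIP-indiscernible over the base.

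In that case the $n+1$ desired sequences must lie among $I_1,\ldots,I_{k+n}$, and this is where a non-NIP-indiscernible (``bad'') sequence pays for one unit of the bound. Over any $B'\supseteq B$ keeping $\mathcal I$ mutually NIP-indiscernible, $I_{k+n+1}$ remains bad over $(B';Aa)$ (being bad is preserved under enlarging the base), so $S_{k,1}$ applied to $\mathcal I$ at $B'$ rules out a $k$-element subset of $\{I_1,\ldots,I_{k+n}\}$ none of whose members is NIP-indiscernible over $(B';Aa)$: adjoining $I_{k+n+1}$ would produce a forbidden $(k+1)$-element subset. Thus $\{I_1,\ldots,I_{k+n}\},a$ satisfies the hypothesis $S_{k-1,1}$ at every such base. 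Invoking the outer inductive hypothesis for $(k-1,n+1)$ on $\{I_1,\ldots,I_{k+n}\}$ --- routing all of its \cref{lem:fin}-enlargements through the full $\mathcal I$, so that $\mathcal I$ stays mutually NIP-indiscernible over the current base throughout --- yields $n+1$ of $I_1,\ldots,I_{k+n}$ mutually NIP-indiscernible over $(BI_{k+n+1};Aa)$, hence over $(B;Aa)$, as required.

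I expect the main obstacle to be exactly this last step for $k>1$: the case $k=1$ of \cref{lem:minlem} gets away with a bare-hands argument (once a sequence is bad the remaining $n$ are forced, with no choices to make), whereas for $k>1$ one only controls smaller subfamilies over various enlarged bases and must reassemble them, which is what forces the extra induction on $k$ through the observation that a bad sequence lowers the effective bound. The accompanying technical cost is keeping careful track --- by always enlarging bases through the full ambient $\mathcal I$ via \cref{lem:fin} --- that $\mathcal I$ remains mutually NIP-indiscernible over the current base and that the distinguished bad sequence remains bad over it, so that every $S_{k,1}$ and $S_{k,n}$ invocation stays licensed where it is used.
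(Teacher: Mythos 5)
Your overall frame (inner induction on $n$, with \cref{lem:fin} used to push non-NIP-indiscernibility of a ``bad'' sequence down to a bare base) matches the paper, but your inductive step takes a genuinely different route, and that route has a gap. The paper does \emph{not} descend in $k$: it applies $S_{k,n}$ symmetrically to $\mathcal I'\setminus\{I_i\}$ over $(BI_i;A)$ for \emph{every} $i\le k+n+1$, obtaining for each $i$ a set $\mathcal I_i$ of $n$ sequences mutually NIP-indiscernible over $(BI_i;Aa)$; if some $I_i$ is NIP-indiscernible over $(B\mathcal I_i;Aa)$ it is done, and otherwise it applies \cref{lem:fin} once per $i$ to produce a single base $B''$ over which all $k+n+1\ge k+1$ sequences are simultaneously not NIP-indiscernible, contradicting $S_{k,1}$ directly. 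You instead distinguish one sequence $I_{k+n+1}$, make it bad over the bare base, and try to ``charge'' it against the budget by invoking the statement for $(k-1,n+1)$ on $\{I_1,\ldots,I_{k+n}\}$.

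The gap is in that last invocation. The inductive hypothesis for $(k-1,n+1)$ requires $\{I_1,\ldots,I_{k+n}\},a$ to satisfy $S_{k-1,1}$, and by \cref{def:fin} this quantifies over \emph{every} base $B'$ over which $\{I_1,\ldots,I_{k+n}\}$ alone is mutually NIP-indiscernible. Your argument for $S_{k-1,1}$ (adjoin the bad $I_{k+n+1}$ to a putative set of $k$ bad sequences and contradict $S_{k,1}$) only works over bases $B'$ over which the \emph{full} family $\mathcal I$, including $I_{k+n+1}$, remains mutually NIP-indiscernible; over a base that destroys that (which the definition of $S_{k-1,1}$ allows), nothing prevents $k$ of $I_1,\ldots,I_{k+n}$ from all being bad --- $S_{k,1}$ only forbids $k+1$ bad ones. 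So the unrelativized inductive hypothesis is not licensed. Your proposed fix --- re-running the whole induction relative to an ambient family through which all \cref{lem:fin}-enlargements are routed --- is not a routine patch: it changes the statement being proved, and it collides with the step where $S$ is applied over bases of the form $BI_j$ with $I_j$ a member of the ambient family (over such a base the ambient family is no longer mutually NIP-indiscernible in the required sense, so the relativized hypothesis is again unlicensed). You would need to state and prove this relativized proposition in full; as written, the argument does not close. The paper's symmetric treatment of all $k+n+1$ omissions avoids the descent in $k$ and with it this entire difficulty.
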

\begin{proof}
    Let $k$ be arbitrary.
    We show that $S_{k,n}$ implies $S_{k,n+1}$ for all $n$, by induction on $n$.
    Let $a$ be a tuple and $\mathcal{I}:=\{I_1,\ldots, I_m\}$ be mutually $(B;A)$-NIP-indiscernible with $m\geq k+n+1$ such that $\mathcal{I},a$ satisfies $S_{k,i}$ for all $1\leq i\leq n$ (though we only use $S_{k,1}$ and $S_{k,n}$). 
    Let $\mathcal{I}':=\{I_1,\ldots, I_{k+n+1}\}$ be a subset of $\mathcal{I}.$
    We prove that $\mathcal{I}'$ contains a subset of size $n+1$ of sequences which are mutually NIP-indiscernible over $(B; Aa)$.

Let $I_i$ be any sequence in $\mathcal{I}'$. Since $\mathcal{I}'\backslash\{I_i\}$ is a set of $n+k$ mutually NIP-indiscernible sequence over $(BI_i;A)$, there is a subset $\mathcal{I}_i$ of size $n$ which are mutually NIP-indiscernible over $(BI_i;Aa)$.
    If $I_i$ is NIP-indiscernible over $(B\mathcal{I}_i;Aa)$, then this yields a set of size $n+1$ of mutually NIP-indiscernible sequences over $(B;Aa)$, proving that $\mathcal{I}, a$ satisfies $S_{k, n+1}.$
    Towards contradiction, we assume that for every $i$, the sequence $I_i$ is not NIP-indiscernible over $(B\mathcal{I}_i;Aa)$.

    Now, we can apply \cref{lem:fin} to each $I_i$, to find a set $B'\supseteq B$ such that $\mathcal{I}'$ is mutually NIP-indiscernible over $(B';A)$, $I_i$ is not NIP-indiscernible over $(B';Aa)$, and $\mathcal{I}_i$ is mutually NIP-indiscernible over $(B'I_i;Aa)$. 
    If we repeat this for every $i$, then we obtain $B''\supseteq B$ so that each $I_i$ is not NIP-indiscernible over $(B'';Aa)$ and $\mathcal{I}'$ is mutually NIP-indiscernible over $(B'';A)$. This contradicts $S_{k,1}$ of $\mathcal{I}.$
\end{proof}
This concludes the proof of \cref{prop:finprop}. Finally, here is the proof of the main theorem.
\subadditivebdn*
\begin{proof}
    Let $\mathcal{I}=\{I_1,\ldots, I_{k_1+k_2+1}\}$ be mutually NIP-indiscernible sequences over $A$. 
    By \cref{prop:finprop} applied to $a_1$ and $\mathcal{I}$, we can find a subset $\mathcal{I}_1\subseteq \mathcal{I}$ of size $k_2+1$ so that $\mathcal{I}_1$ is a collection of mutually NIP-indiscernible sequences over $Aa_1$. 
    Because $\bdn(a_2/Aa_1)\leq \bdn(a_2/A)\leq k_2$, by \cref{thm:bdn=nipdp}, there exists a sequence $I'\in \mathcal{I}_1$ that is NIP-indiscernible over $Aa_1a_2.$ 
\end{proof}

\subsection{Connection between the Burden and $VC^\ast$-dimension}\label{subsec:vcdim}
Finally, we note that by using \cref{thm:bdn=nipdp}, we can translate the connection between the dp-rank and the dual VC density to the connection between the burden and the dual VC density (assuming that $T$ has dependent dividing).

Let $S_\Delta^{p(y)}(A)$ denote the set of all $\Delta$-types over $A$ consistent with $p(y).$
\begin{definition}
    The $VC^\ast_\Delta$-density of a type $p(y)$ over a set $C$ is 
    \[\inf\{r\in \rr^{\geq 0}: |S_\Delta^{p(y)}(A)|=O(|A|^r) \text{ for all finite $A\subseteq C^{|y|}$}\}.\]
\end{definition}
Formally, this means that there exists a function $f:\nn \to \rr_+$ such that $f=O(n^r)$, and $|S_\Delta^{p(y)}(A)|\leq f(|A|)$ for all $A\subseteq C$ finite. 

\begin{fact}[\cite{KOU11}, Proposition 5.2]\label{dp-vc}
    Let $p(y)$ be a type over $C$ and $\Delta$ be a set of formulas which is closed under boolean combinations. Then, the following are equivalent for $k<\omega$.
    \begin{enumerate}
        \item There is an ict-pattern of depth $k$ witnessed by witnessed by formulas in $\Delta$. 
        \item  There is an $C$-indiscernible sequence $I$ and some formula $\varphi(x,y)\in \Delta$ such that $p(x)$ has $VC^\ast_\varphi$-density of at least $k$ over $I$.
        \item There is an $C$-indiscernible sequence $I$ and some formula $\varphi(x,y)\in \Delta$ such that $p(x)$ has $VC^\ast$-density bigger than $k-1$ with respect to $\varphi(x,y)$ over $I$.
    \end{enumerate}
\end{fact}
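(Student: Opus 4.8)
The plan is to prove the cycle $(1)\Rightarrow(2)\Rightarrow(3)\Rightarrow(1)$. The middle implication $(2)\Rightarrow(3)$ is immediate, since a $VC^\ast_\varphi$-density of at least $k$ is in particular a $VC^\ast$-density strictly greater than $k-1$. For the two remaining directions it is convenient to fix, for an indiscernible sequence $I$ and a formula $\varphi(x;y)\in\Delta$, the shatter function $\pi(n)$ counting the $\varphi$-types over an $n$-element subset of $I$ that are consistent with $p$; by indiscernibility of $I$ this number depends only on $n$ and is non-decreasing, and the $VC^\ast_\varphi$-density of $p$ over $I$ is exactly $\limsup_n \log\pi(n)/\log n$.

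For $(1)\Rightarrow(2)$ I would start from an ict-pattern of depth $k$ with formulas $(\varphi_\alpha)_{\alpha<k}$ from $\Delta$ and array $(b_{\alpha,i})_{\alpha<k,\,i<\omega}$. By the usual extraction (Ramsey) argument I may assume the rows are mutually indiscernible, so that $I=(c_i)_i$ with $c_i=b_{0,i}\cdots b_{k-1,i}$ is $C$-indiscernible. Using closure of $\Delta$ under Boolean combinations (after padding with dummy parameter variables), put $\psi(x;z_0,\dots,z_{k-1}):=\bigvee_{\alpha<k}\varphi_\alpha(x;z_\alpha)\in\Delta$. Given finite $S$ with $|S|=n$ and $A=\{c_i:i\in S\}$, each injective path assigning to each row $\alpha$ an ``on-index'' in $S$ is realized by some $d\vDash p$, and for such $d$ one checks that $\{i\in S:\vDash\psi(d;c_i)\}$ equals precisely the chosen $k$-element subset of $S$. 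Distinct $k$-subsets yield distinct $\psi$-types consistent with $p$, so $\pi(n)\geq\binom{n}{k}=\Theta(n^k)$ and the $VC^\ast_\psi$-density of $p$ over $I$ is at least $k$.

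The substantial direction is $(3)\Rightarrow(1)$: from a $VC^\ast$-density strictly greater than $k-1$ along an indiscernible sequence I must manufacture a genuine depth-$k$ ict-pattern with formulas in $\Delta$. I would proceed by induction on $k$. The base case $k=1$ reduces to the observation that $\pi(n)\to\infty$ forces instances of $\varphi$ to be switchable ``one at a time'': localizing a single alternation and extracting an indiscernible row produces a depth-$1$ pattern. For the inductive step I would index $I$ by a rich dense order, split it into consecutive zones, fix the $\varphi$-type on the final zone to peel off one coordinate, apply the inductive hypothesis to the residual shatter function on the earlier zones (which should still have degree exceeding $k-2$) to obtain a depth-$(k-1)$ pattern, and finally splice in the peeled coordinate, re-extracting so that all $k$ rows become mutually indiscernible and the path-consistency condition holds.

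The crux, and the step I expect to be the main obstacle, is the integer jump concealed in that inductive step: one must show that fixing a single coordinate drops the degree of $\pi$ by exactly one, equivalently that the $VC^\ast$-density along an indiscernible sequence never lands strictly between the consecutive integers $k-1$ and $k$. This is a Sauer--Shelah-type analysis of the shatter function: the sub-multiplicativity $\pi(m+n)\leq\pi(m)\,\pi(n)$, obtained by splitting $A$ at an order cut and mapping a type to the pair of its restrictions, must be sharpened to control how $\varphi$-types on disjoint order-blocks recombine, and the extraction must be arranged so that the resulting rows are simultaneously mutually indiscernible and independent. Making these two requirements compatible is where the real work of the proof lies.
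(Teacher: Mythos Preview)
This statement appears in the paper as a \emph{Fact}, attributed to \cite{KOU11}, Proposition~5.2; the paper supplies no proof of its own and simply invokes the result to deduce the subsequent corollary. There is therefore no proof in the present paper against which to compare your argument.

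For what it is worth: your $(1)\Rightarrow(2)$ is essentially correct and is the standard construction---after extracting mutually $C$-indiscernible rows, the disjunction $\psi$ realizes each $k$-element subset of $S$ as a distinct $\psi$-type consistent with $p$, giving $\pi(n)\geq\binom{n}{k}$. You are also right that $(3)\Rightarrow(1)$ carries the real content and that the crux is integrality of the $VC^\ast$-density along an indiscernible sequence. However, your sketch does not actually establish that step: the sub-multiplicativity $\pi(m+n)\leq\pi(m)\pi(n)$ by itself says nothing about the growth exponent being an integer, and your ``peel off one coordinate'' move---that fixing the $\varphi$-type on one zone drops the degree by exactly one---is precisely the assertion needing proof, not a tool you may invoke. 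To complete the argument you would have to carry out the combinatorial analysis from \cite{KOU11} directly.
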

We obtain the following corollary by letting $\Delta$ be the set of NIP formulas (which is closed under boolean combinations), and using \cref{thm:bdn=nipdp}.
\begin{corollary}\label{bdn-vc}
    Assume that $T$ has dependent dividing. Let $p(y)$ be a type over $C$. Then, the following are equivalent.
        \begin{enumerate}
        \item $\bdn(p)\geq k$.
        \item  There is an $C$-indiscernible sequence $I$ and an NIP formula $\varphi(x,y)$ such that $p(x)$ has $VC^\ast_\varphi$-density of at least $k$ over $I$.
        \item There is an $C$-indiscernible sequence $I$ and an NIP formula $\varphi(x,y)$ such that $p(x)$ has $VC^\ast$-density bigger than $k-1$ with respect to $\varphi(x,y)$ over $I$.
    \end{enumerate}
\end{corollary}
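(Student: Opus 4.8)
The plan is to invoke \cref{dp-vc} with $\Delta$ taken to be the collection of all NIP partitioned formulas $\varphi(x;y)$ (with $x$ the home variable of $p$), and then to identify the resulting combinatorial condition with the burden by means of \cref{thm:bdn=nipdp}. Concretely, I would first check the hypothesis of \cref{dp-vc}, namely that $\Delta$ is closed under boolean combinations. This is the standard observation that the negation of an NIP formula is NIP and that a finite conjunction (equivalently, disjunction) of NIP formulas is again NIP; the latter follows from the fact that VC classes are closed under finite intersections and unions, or directly from a Sauer--Shelah counting argument. This closure is already asserted in the sentence preceding the statement, so in the write-up it suffices to record it briefly.

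With the hypothesis verified, \cref{dp-vc} applies to $\Delta = \{\text{NIP formulas}\}$ and yields that, for $k<\omega$, the following are equivalent: (a) there is an ict-pattern of depth $k$ in $p$ all of whose formulas lie in $\Delta$; (b) condition (2) of the corollary; (c) condition (3) of the corollary. Now an ict-pattern whose every formula is NIP is exactly an NIP-ict-pattern in the sense of \cref{def:nipcit}, so (a) is precisely the assertion that there is an NIP-ict-pattern of depth $k$ in $p$.

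It then remains only to connect (a) with condition (1), $\bdn(p)\geq k$. This is exactly the equivalence (1)$\Leftrightarrow$(3) of \cref{thm:bdn=nipdp}, which is available because $T$ has dependent dividing. Chaining the equivalences (1)$\Leftrightarrow$(a)$\Leftrightarrow$(2)$\Leftrightarrow$(3) finishes the argument.

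I do not expect a genuine obstacle: the only step needing any care is the verification that the NIP formulas form a boolean-closed class, and the only substantive input, \cref{thm:bdn=nipdp}, has already been established. The one bookkeeping point to watch is that \cref{dp-vc} is phrased with the type in the variable $y$ but its patterns and sequences in the variable $x$; I would simply keep the partition straight, so that the NIP formula $\varphi(x,y)$ produced has $x$ ranging over realizations of $p$ and $y$ over the parameters, in agreement with \cref{def:nipcit}.
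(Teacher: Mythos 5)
Your proposal is correct and is exactly the paper's argument: apply \cref{dp-vc} with $\Delta$ the (boolean-closed) class of NIP formulas, note that an ict-pattern with formulas in $\Delta$ is precisely an NIP-ict-pattern, and use the equivalence of (1) and (3) in \cref{thm:bdn=nipdp} to identify that condition with $\bdn(p)\geq k$. No differences worth noting.
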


\subsection*{Acknowledgments} The results of this paper are part of the author’s Ph.D. thesis, supervised by Tom Scanlon. The author thanks him for his guidance and support. 
The author also thanks Nick Ramsey for his lecture videos on model-theoretic tree properties, which sparked the author's interest in $\ntptwo$ theories and burden.

This research was partially supported by the ANRI Fellowship and a scholarship from the Japan Student Services Organization (JASSO). 

\bibliography{references}
\end{document}